\documentclass[11pt, reqno]{amsart}

\usepackage{amssymb,latexsym,amsmath,amsfonts}
\usepackage{mathrsfs}
\usepackage{graphicx}
\usepackage[usenames]{color}
\usepackage{enumitem}
\allowdisplaybreaks

\voffset = -40pt
\hoffset = -35pt
\textwidth = 5.95in
\textheight = 8.87in
\numberwithin{equation}{section}

\theoremstyle{definition}
\newtheorem{definition}{Definition}[section]

\theoremstyle{remark}
\newtheorem{remark}[definition]{Remark}

\theoremstyle{plain}
\newtheorem{theorem}[definition]{Theorem}
\newtheorem{result}[definition]{Result}
\newtheorem{lemma}[definition]{Lemma}
\newtheorem{proposition}[definition]{Proposition}

\newtheorem{fact}[definition]{Fact}

\newcommand{\eps}{\varepsilon}

\newcommand{\zt}{\zeta}

\newcommand{\zahl}{\mathbb{Z}}  
\newcommand{\nat}{\mathbb{N}}
\newcommand{\excep}{\mathcal{E}}

\newcommand{\fami}{\mathscr{F}}
\newcommand{\fat}{\boldsymbol{{\sf F}}}
\newcommand{\jul}{\boldsymbol{{\sf J}}}
\newcommand{\Lb}{\boldsymbol{{\sf m}}}


\newcommand{\bdy}{\partial}

\newcommand{\OM}{\Omega}

\newcommand{\dee}{\mathbb{D}}


\newcommand{\smoo}{\mathcal{C}}

\newcommand{\holo}{{\sf Hol}}



\newcommand{\di}{{\sf diam}}
\newcommand{\are}{{\sf area}}

\newcommand{\bcdot}{\boldsymbol{\cdot}}

\newcommand{\lrarw}{\longrightarrow}

\newcommand{\corr}{\varGamma}
\newcommand{\acorr}{\varGamma^\dagger}
\newcommand\aGa[1]{\Gamma^\dagger_{{#1}}}

\newcommand{\FS}{\omega_{FS}}
\newcommand{\weakST}{\xrightarrow{\text{weak${}^{\boldsymbol{*}}$}}}
\newcommand\Add[1]{\sum_{{#1}}\nolimits^{\prime}}
\newcommand{\IVar}{\Gamma^\bullet}

\newcommand\newfr[2]{{}^{\raisebox{-2pt}{$\scriptstyle {#1}$}}\!/_{\raisebox{2pt}{$\scriptstyle {#2}$}}}

\newcommand{\Lam}{\boldsymbol{\Lambda}}
\newcommand{\gen}{\mathscr{G}}
\newcommand\rep[1]{{\sf R}({#1})^\bullet}
\newcommand\Repp[1]{\mathscr{R}({#1})^\bullet}
\newcommand{\blot}{\bullet}
\newcommand\bran[2]{{}^{{#1}}\gamma_{{#2}}}
\newcommand\chaf[1]{\chi_{\raisebox{-2pt}{$\scriptstyle {{#1}}$}}}
\newcommand{\wt}{\widetilde}
\newcommand{\muLeb}{\mu_{\raisebox{-2pt}{$\scriptstyle{{\rm Leb}}$}}}


\newcommand{\cplx}{\mathbb{C}} 

\newcommand{\rea}{\mathbb{R}}
\newcommand{\pro}{\mathbb{P}^1}


\begin{document}

\title[Correspondences related to rational semigroups]{Holomorphic correspondences related to \\
finitely generated rational semigroups}

\author{Gautam Bharali}
\address{Department of Mathematics, Indian Institute of Science, Bangalore 560012, India}
\email{bharali@math.iisc.ernet.in}

\author{Shrihari Sridharan}
\address{Indian Institute of Science Education \& Research, Thiruvananthapuram 695016, India}
\email{shrihari@iisertvm.ac.in}

\thanks{The first author is supported in part by a Swarnajayanti Fellowship (Grant no.\;DST/SJF/MSA-02/2013-14)
and by a UGC Centre for Advanced Study grant}

\keywords{Equilibrium measure, Hausdorff dimension, repelling fixed points}
\subjclass[2010]{Primary 37F05, 37F10; Secondary 30G30}

\begin{abstract} 
In this paper, we present a new technique for studying the dynamics of a finitely generated
rational semigroup. Such a semigroup can be associated naturally to a
certain holomorphic correspondence on $\pro$. Results on the iterative dynamics
of such a correspondence can now be applied to the study of the rational semigroup. We
focus on an invariant measure for the aforementioned correspondence\,---\,known
as the equilibrium measure. This confers some advantages over many of the known techniques 
for studying the dynamics of rational
semigroups. We use the equilibrium measure to analyse the distribution
of repelling fixed points of a finitely generated rational semigroup, and to derive a sharp bound
for the Hausdorff dimension of the Julia set of such a semigroup.
\end{abstract}
\maketitle

\section{Introduction and Statement of Results}\label{S:intro}

Given two compact complex manifolds $X_1$ and $X_2$ of dimension $k$, a holomorphic correspondence
from $X_1$ to $X_2$ is, in essence, a relation from $X_1$
to $X_2$ that is an analytic subset of $X_1\times X_2$. Since two relations can be composed, one would
expect to have a theory of the iterative dynamics of a holomorphic correspondence. It is to this end\,---\,which
requires notions such as the degree of a correspondence, the ability to
count inverse images according to multiplicity, etc.\,---\,that one has the following definition.
 
\begin{definition}\label{D:holCorr}
With $X_1$ and $X_2$ as above, we say that $\corr$ is
a {\em holomorphic $k$-chain} in $X_1\times X_2$ if $\corr$ is a formal linear combination
of the form
\begin{equation}\label{E:stdForm}
 \corr\;= \sum_{j=1}^N m_j\Gamma_j,
\end{equation}
where the $m_j$'s are positive integers and $\Gamma_1,\dots,\Gamma_N$ are distinct irreducible
complex-analytic subvarieties of $X_1\times X_2$ of pure dimension $k$. Let $\pi_i$ denote the projection onto 
$X_i, \ i=1,2$. We say that $\corr$ is a {\em holomorphic correspondence of $X_1$ onto $X_2$} if
\begin{itemize}
  \item[$a)$] for each $\Gamma_j$ in \eqref{E:stdForm},
  $\left.\pi_1\right|_{\Gamma_j}$ and $\left.\pi_2\right|_{\Gamma_j}$ are surjective;
  \item[$b)$] for each $x\in X_1$ and $y\in X_2$, $\left(\pi_1^{-1}\{x\}\cap \Gamma_j\right)$ and
  $\left(\pi_2^{-1}\{y\}\cap \Gamma_j\right)$ are finite sets for each $j$.
 \end{itemize}
\end{definition}

When $X_1$ and $X_2$ are compact Riemann surfaces, the condition $(b)$ above holds true automatically,
owing to holomorphicity and dimension.
A holomorphic correspondence $\corr$ determines a set-valued function, which
we denote by $F_\corr$, as follows: 
\[
X_1\supseteq A \longmapsto \bigcup\nolimits_{j=1}^N\!\pi_2\left(\pi_1^{-1}(A)\cap \Gamma_j\right).
\]
We will denote $F_\corr(\{x\})$ by $F_\corr(x)$.
\smallskip

This is the first of several articles that will use correspondences to deduce new information
on the dynamics of finitely generated rational semigroups  and revisit previously studied
phenomena under {\em weaker assumptions} than those stated in the literature. (One precise,
quantitative reason why correspondences would confer an advantage over current
techniques in studying rational semigroups is
discussed in Remark~\ref{Rem:elbrtn}.)  
\smallskip
 
A {\em rational semigroup} is a subsemigroup of $\holo(\pro; \pro)$\,---\,the semigroup with
respect to composition of holomorphic endomorphisms of $\pro$\,---\,containing no constant
functions. The study of dynamics of rational semigroups was initiated by Hinkkanen and Martin
\cite{hinkMartin:dsrf-I96, hinkMartin:spsrf96, hinkMartin:Jsrs96}, and a lot is known by now about such dynamical
systems. The aim of this work is to learn more about a finitely generated rational semigroup by associating to it a
natural holomorphic correspondence and studying the dynamics of the latter.

\begin{definition}\label{D:assoc}
Let $S$ be a finitely generated rational semigroup and let $\gen = \{g_1,\dots, g_N\}$ be a
set of generators of $S$, i.e., $S\,=\,\langle g_1,\dots, g_N\rangle$.
We call the holomorphic $1$-chain in $\pro\times \pro$
\begin{equation}\label{E:assoc}
 \corr_\gen\,:=\,\sum_{1\leq j\leq N}{\sf graph}(g_j)
\end{equation}
the {\em holomorphic correspondence associated to $(S, \gen)$}. We shall denote the
set-valued function determined by $\corr_\gen$ by $F_\gen$.
\end{definition}

$F_\gen$ is a useful book-keeping device if one is interested in the statistics of the $S$-orbits
$S\bcdot{x}$, $x\in \pro$. We shall give a formal definition of the composition of two correspondences $\corr_1$ and
$\corr_2$ in Section~\ref{S:prelims}, but we note here that the irreducible components of $\corr_2\circ\corr_1$ are
{\em precisely} those of the relation obtained by composing
the {\em relations} $\corr_2$ and $\corr_1$. Thus:
\[
 S\bcdot{x}\,=\,\bigcup\nolimits_{n\in \nat}F^{n}_\gen(x)
\]
and the finite unions $\bigcup_{0\leq n\leq M}F^{n}_\gen(x)$ give an exhaustion of $S\bcdot{x}$. Here,
$F^{n}_\gen$ is the set-valued map determined by the $n$-fold iterate of $\corr_\gen$.
\smallskip

One can construct useful invariant measures\,---\,which are analogous to the constructions of
Brolin \cite{brolin:isirf65} or Lyubich \cite{lyubich:epreRs83} for rational maps\,---\,for
holomorphic correspondences. Definition~\ref{D:assoc} provides a bridge between such a measure
and the study of a finitely generated semigroup.
In our case, we wish to exploit a measure constructed by Dinh and
Sibony \cite{dinhSibony:dvtma06}. Suppose $X$ is a compact Riemann surface, and $\corr$
a holomorphic correspondence of $X$ onto itself. Let $d_1(\corr)$ be the generic number
of inverse images of $\corr$ and let $d_0(\corr)$ be the generic number of forward images, both
counted according to multiplicity (see Section~\ref{S:prelims}). Dinh--Sibony show that regular
Borel measures can be pulled back under a correspondence: for such a measure $\mu$,
denote the pull-back by $F_\corr^*\mu$ (refer to \cite[Section~2.4]{dinhSibony:dvtma06} for details).
One of the main results of \cite{dinhSibony:dvtma06}, applied to the setting of compact Riemann
surfaces, states that when $d_1(\corr) > d_0(\corr)$, there exist a polar set $\excep\varsubsetneq X$
and a regular Borel probability measure $\mu_\corr$ such that
\begin{equation}\label{E:asymp1}
 \frac{1}{d_1(\corr)^n}F_{\corr^{\circ n}}^*(\delta_x) \weakST \mu_\corr \;\; \text{as $n\to \infty$,} \;\; 
 \forall x\in X\setminus\excep.
\end{equation}
Following Dinh--Sibony, we shall call the above measure the {\em equilibrium measure for the correspondence
$\corr$}. We must point out here that, under certain constraints on the generating sets $\gen$ of
a finitely generated rational semigroup $S$, the measure $\mu_\corr$ for the correspondence \eqref{E:assoc}
was discovered by Boyd \cite{boyd:imfgrs99}\,---\,albeit not in the context represented by \eqref{E:asymp1}.
Boyd's construction requires $S$ to be such that there is a
set of generators $\gen = \{g_1,\dots, g_N\}$ such that $\deg(g_j)\geq 2 \ \forall j = 1,\dots, N$. This is
necessitated by certain expansivity considerations; see \cite[Sections~3~\&~6]{boyd:imfgrs99}. One of the
advantages of using correspondences is that one sees\,---\,with considerable clarity\,---\,that the condition
\begin{equation}\label{E:key_cond}
 d_1(\corr_\gen)\,>\,d_0(\corr_\gen), \; \; \; \text{equivalently} \; \; \; 
 \sum\nolimits_{1\leq j\leq N}\deg(g_j)\,>\,N,
\end{equation}
suffices to have a very useful quantitative account of the expanding features of $S$\,---\,see
Lemma~\ref{L:many_branches} for details. Now,
\eqref{E:key_cond} is also the condition under which
we have the convergence \eqref{E:asymp1}. Thus, one has the analogue of Boyd's measure for a finitely generated
rational semigroup $(S, \gen)$ such that $\deg(g_j)\geq 2$ for at least one $j$, $1\leq j\leq N$.
\smallskip

Now, for {\em any} rational semigroup $S$, recall that the 
{\em Fatou set} of $S$, denoted by $\fat(S)$, is the largest open subset of $\pro$ on which $S$ is a normal family,
and that the {\em Julia set} of $S$, denoted by $\jul(S)$, is defined as:
\[
 \jul(S)\,:=\,\pro\setminus\fat(S).
\]
Also recall:

\begin{result}[Hinkkanen--Martin, \cite{hinkMartin:dsrf-I96}]\label{R:hinkMart_rep-pts}
Let $S$ be a rational semigroup containing at least one element of degree at least $2$. Then
the set of all repelling fixed points of all the elements of $S$ is dense in $\jul(S)$. 
\end{result}

\noindent{It follows that $\jul(S)$ is the closure of the union of the Julia sets of all the
elements in $S$. One sees, thus, that repelling
fixed points are a significant feature of the dynamics of a rational semigroup. This raises the
question: what is the distribution of these fixed points in $\jul(S)$\,? To make this question precise,
we consider a finitely generated semigroup $S$. Fixing a
set of generators $\gen = \{g_1,\dots, g_N\}$, a {\em word} will refer to any composition
$g_{j_m}\circ\dots\circ g_{j_2}\circ g_{j_1}$, $m\in \zahl_+$ and $j_1,\dots, j_m\in \{1,\dots, N\}$.
For a word $w$, the expression $|w|_\gen = m$ is the shorthand for the following implication: 
\[
 |w|_\gen = m\,\Rightarrow\,\text{$\exists j_1,\dots, j_m\in \{1,\dots, N\}$ such that
 $w = g_{j_m}\circ\dots\circ g_{j_1}$.}
\]
A more precise form of the above question is:}
\vspace{1mm}

\noindent{{\bf Question.} Let $\rep{n; \gen}$ be the {\em list} of repelling fixed points, repeated
according to multiplicity, of each of the words $w\in S$ given by $|w|_\gen = n$, and let $E$ be a Borel
subset of $\jul(S)$. What fraction of $\rep{n; \gen}$ is contained in $E$ for $n$ large\,?}
\vspace{1mm}

In view of \eqref{E:key_cond}, for $S$ as in Result~\ref{R:hinkMart_rep-pts} and finitely generated: 
fixing a generating set $\gen$, the equilibrium measure for the correspondence associated to
$(S, \gen)$\,---\,denote it by $\mu_\gen$\,---\,exists (see subsection~\ref{SS:DSM}).
The measure $\mu_\gen$ provides the answer to our question.

\begin{theorem}\label{T:distrbn_rep-fixpts}
Let $S$ be a finitely generated rational semigroup and let $\gen = \{g_1,\dots, g_N\}$ be a set of
generators. Assume that $\deg(g_j)\geq 2$ for at least one $j$, $1\leq j\leq N$. Let $\mu_\gen$ denote
the equilibrium measure for the correspondence $\corr_\gen$ associated to $(S, \gen)$. 
Write
\[
 D\,:=\,d_1(\corr_\gen)\,=\,\sum\nolimits_{1\leq j\leq N}\deg(g_j).
\]
Then, for the sequence of measures $\{\mu_n\}$, $\mu_n$ as defined below, we have:
\begin{equation}\label{E:asymp2}
 \mu_n\,:=\,\frac{1}{D^n}\sum\nolimits_{x\in \rep{n;\,\gen}}\!\delta_x\weakST \mu_\gen \;\; \text{as $n\to \infty$.}
\end{equation}
\end{theorem}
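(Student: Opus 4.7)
The plan is to adapt the Briend--Duval method for equidistribution of repelling periodic points of a rational map to the setting of correspondences. The starting input is the Dinh--Sibony convergence: for every $x$ outside a polar set $\excep\varsubsetneq\pro$,
\[
 \frac{1}{D^n}\sum_{|w|_\gen = n}\sum_{y\in w^{-1}(x)} m_w(y)\,\delta_y \,\weakST\, \mu_\gen,
\]
where $m_w(y)$ is the local multiplicity of $w$ at $y$ and the total mass equals $D^n = \sum_{|w|_\gen=n}\deg(w)$.

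The main idea is to replace each inverse image of a $\mu_\gen$-typical point $x_0$ by a nearby repelling fixed point of the same word. Fix $x_0\notin\excep$ and a small closed disk $B$ around $x_0$. For any word $w$ and any univalent inverse branch $\psi$ of $w$ defined on $B$ whose image $\psi(B)$ is compactly contained in $B$, the Schwarz--Pick lemma produces a unique fixed point of $\psi$ in $B$ at which $\psi$ is a strict contraction; this point is therefore a repelling fixed point of $w$. The central technical task is to verify that for $\mu_\gen$-almost every $x_0$ and every sufficiently small $B$, all but a weighted total of $o(D^n)$ of the inverse branches of $\corr_\gen^{\circ n}$ at $x_0$ have this contracting-into-$B$ property, with ``weighted'' referring to the count $\sum_w\deg(w)$. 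This is where Lemma~\ref{L:many_branches} enters: its quantitative expansivity statement, combined with classical Koebe-type distortion estimates on $\pro$, should deliver the required bound on ``bad'' branches.

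Given $\varphi\in\smoo(\pro)$ and $\eps>0$, I would cover $\pro$ by finitely many small disks $B_i$ on which $\varphi$ oscillates by less than $\eps$, centered at points $x_i$ satisfying the conclusions of the previous paragraph. Each good inverse branch over $x_i$ produces a repelling fixed point of the associated word inside $B_i$; summing over $i$ and invoking the Dinh--Sibony convergence at each $x_i$ gives, after an error of $O(\eps\|\varphi\|_\infty)$ from the oscillation of $\varphi$, the desired convergence $\int\!\varphi\,d\mu_n \to \int\!\varphi\,d\mu_\gen$. Two accounting items remain: the weighted contribution of the bad branches, which is $o(D^n)$ by construction, and those elements of $\rep{n;\gen}$ not supplied by the construction. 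Since the total fixed-point count (with multiplicity) for a word $w$ is $\deg(w)+1$, the excess over $D^n$ when summed over $|w|_\gen=n$ is exactly $N^n$; bounding the non-repelling fixed points via Fatou-type estimates (each $w$ has only $O(\deg(w))$ non-repelling cycles) keeps the correction $o(D^n)$, since $N<D$ under the hypothesis that some $\deg(g_j)\geq 2$.

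The heart of the difficulty is the quantitative inverse-branch estimate in the second paragraph: upgrading the expansivity of Lemma~\ref{L:many_branches} to an $o(D^n)$ control of bad branches, with multiplicities correctly weighted by $\deg(w)$ rather than counted as words. This requires careful control of how often inverse branches of $\corr_\gen^{\circ n}$ encounter the critical loci of the individual generators $g_j$.
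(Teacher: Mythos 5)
Your proposal follows the same Briend--Duval/Lyubich strategy as the paper (which proves an auxiliary Theorem~\ref{T:rep-fixpts_corr} for correspondences of the form \eqref{E:special} and then specializes to $\corr_\gen$): start from the Dinh--Sibony pullback convergence, produce repelling fixed points as attracting fixed points of contracting inverse branches of $\corr_\gen^{\circ n}$ on a small domain, use Lemma~\ref{L:many_branches} for the diameter decay, and close with a fixed-point count. The two conceptual ingredients you correctly identify (Schwarz--Pick/Wolff--Denjoy giving a repelling fixed point, and Lemma~\ref{L:many_branches} controlling most inverse branches) are exactly what the paper uses.

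The gap is the one you yourself flag at the end, and it is not a small one: Lemma~\ref{L:many_branches} only applies once you already know that $U$ carries $(1-\eps/2)d_1(\corr_n)$ \emph{regular} inverse branches; it does not produce them. In the paper this hypothesis is supplied by Lemma~\ref{L:branch-mult} (a Lyubich-style count showing at least $D^n[1 - (N/D)^L\tau(N+1)]$ regular inverse branches on any simply-connected domain disjoint from the finite set $C_L$), and then Lemma~\ref{L:simpConn} is the crucial additional input: it constructs a single simply-connected domain, biholomorphic to $\dee$, that both avoids $C_L$ and carries $\mu_\gen$-mass arbitrarily close to $1$. Your plan of ``covering $\pro$ by finitely many small disks $B_i$ centered at good points'' leaves this unaddressed: a cover of $\jul(S)$ by disks will in general meet $C_L$, so you must both excise those disks and show the excised part carries small $\mu_\gen$-mass, which is precisely what Lemma~\ref{L:simpConn} is engineered to do in a way that yields a simply connected region. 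In addition, the covering scheme introduces an overcounting problem (a repelling fixed point lying in two overlapping $B_i$'s is produced twice), which the paper's single-domain set-up avoids by construction. Finally, the appeal to ``Fatou-type estimates on non-repelling cycles'' is a red herring: since $\rep{n;\gen}$ only lists repelling fixed points, the B\'ezout count $\sharp\rep{n;\gen}\leq D^n+N^n$ (coming from intersecting the diagonal with $|\corr_\gen^{\circ n}|$, i.e.\ from the $(\deg w+1)$-count you wrote) already suffices, combined with $N<D$, to show the mass not accounted for by the constructed branches is $O(\eps) + (N/D)^n$; no separate estimate on non-repelling cycles is needed or used in the paper.
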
 

\begin{remark}
We must note that $\mu_n$ is not, in general, of mass $1$. But it is not hard to show that
$\sharp\rep{n; \gen}\leq  D^n + N^n$, whence, for large $n$, $\mu_n(E)$ does give a very good
estimate of the fraction on $\rep{n\,\gen}$ contained in $E$, for $E$ as in 
the question above.
\end{remark}

The earliest investigations into the invariant-measure aspects of rational semigroups were
undertaken by Sumi  \cite{sumi:HdJshrs98, sumi:ds-hs-hrsgsp01}\,---\,following
the perspectives introduced by Sullivan \cite{sullivan:cds83}. For a finitely-generated
rational semigroup $S$, he defined $\delta$-conformal measures and 
$\delta$-subconformal measures in analogy with the concepts in \cite{sullivan:cds83}. He showed that
under certain restrictions on $S$, the Hausdorff dimension of $\jul(S)$ is
the unique number $\delta$ such that a $\delta$-conformal measure for $S$ exists. These
results give a connection between two very different aspects of the dynamics of $S$, but they
do not yield explicit estimates for the Hausdorff dimension of $\jul(S)$. 
\smallskip

Working with the equilibrium measure $\mu_\gen$ and the use of the formalism of
correspondences gives effective estimates. Our second theorem illustrates the claim that the
use of correspondences allows us to study aspects of $\jul(S)$ under {\em weaker
assumptions on $S$ than those assumed in the literature} (and with estimates that are sharp).
To elaborate, we shall first present our next result,  and then compare it with what is
currently known.
\smallskip

Some observations are in order before we state our next result. In what follows, we shall use
$\dim_H$ to denote the Hausdorff dimension. 
Suppose a rational semigroups $S = \langle g_1, g_2,\dots, g_N\rangle$ satisfies the
condition:
\[
 \jul(S)\cup g_1\big(\jul(S)\big)\cup\dots\cup g_N\big(\jul(S)\big)\,\varsubsetneq\,\pro.
\]
Then, we can choose projective coordinates on $\pro$ such that the point at infinity lies outside
$\jul(S)\cup\,g_1(\jul(S))\cup\dots\cup\,g_N(\jul(S))$. Hence, for such a semigroup $S$, 
there is no loss of generality in assuming:
\begin{equation}\label{E:J_in-C}
 \text{$\jul(S)\subset \cplx$ and $g_j$ has no poles in $\jul(S)$, $1\leq j\leq N$.}
\end{equation}
This observation is relevant to the statement of our next theorem.  

\begin{theorem}\label{T:hausDim}
Let $S$ be a finitely generated rational semigroup and let $\gen = \{g_1,\dots, g_N\}$ be a set of
generators. Assume that $\deg(g_j)\geq 2$ for at least one $j$, $1\leq j\leq N$, and that, for each
$j$, $g_j$ has no critical points in $\jul(S)$. 
\begin{itemize}[leftmargin=16pt]
 \item[$a)$] If $\jul(S)\cup\,g_1(\jul(S))\cup\dots\cup\,g_N(\jul(S)) = \pro$, then
 $\dim_H(\jul(S)) = 2$.
 \item[$b)$] If $\jul(S)\cup\,g_1(\jul(S))\cup\dots\cup\,g_N(\jul(S))\varsubsetneq \pro$, then, assuming
 the condition \eqref{E:J_in-C}, write
 \[
  M\,:=\,\sup\big\{|g_j^\prime(\xi)| : \xi\in \jul(S), \ j = 1,\dots, N\big\}.
 \]
 Then
 \begin{equation}\label{E:hausDim_loEst}
  \dim_H(\jul(S))\,\geq\,\frac{\log\big(\max_{1\leq j\leq N}\deg(g_j)\big)}{\log(M)}.
 \end{equation}
 This inequality is sharp in that, for each $N\in \zahl_+$, there is a
 semigroup $S_N$ with $N$ generators, and with the above properties, for which \eqref{E:hausDim_loEst}
 holds as an {\em equality}.
\end{itemize}
\end{theorem}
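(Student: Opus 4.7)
My plan for part (a) is to invoke the standard forward invariance of the Julia set under each generator of a rational semigroup, $g_j(\jul(S)) \subseteq \jul(S)$ for $1 \leq j \leq N$ (a fact going back to Hinkkanen--Martin). Under this, the hypothesis $\jul(S) \cup g_1(\jul(S)) \cup \cdots \cup g_N(\jul(S)) = \pro$ collapses to $\jul(S) = \pro$, and since $\pro$ is a real $2$-manifold, $\dim_H(\jul(S)) = 2$.

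For the lower bound in part (b), rather than iterating $F_\corr^*\mu_\gen = D\mu_\gen$ directly\,---\,the naive approach, combined with the $M$-Lipschitz bound, gives only $\log(D/N)/\log M \leq \log(\max_j \deg g_j)/\log M$, losing the maximum\,---\,I would reduce to a classical one-map Brolin--Lyubich argument. Choose $g_{j^*}$ with $d^* := \deg(g_{j^*}) = \max_j \deg(g_j) \geq 2$. Since $g_{j^*} \in S$, one has $\jul(g_{j^*}) \subseteq \jul(S)$; and since $g_{j^*}$ has no critical points in $\jul(S) \supseteq \jul(g_{j^*})$, the map is hyperbolic on its own Julia set. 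Let $\mu^*$ denote the Brolin--Lyubich measure of $g_{j^*}$, supported on $\jul(g_{j^*})$ and satisfying $g_{j^*}^*\mu^* = d^*\mu^*$. By hyperbolicity there is a uniform $r_0 > 0$ such that for $x \in \jul(g_{j^*})$ and $n$ with $M^{n} r < r_0$, the restriction $g_{j^*}^n|_{B(x,r)}$ is injective; iterating the invariance together with the $M$-Lipschitz bound on $\jul(g_{j^*})$ gives
\[
\mu^*(B(x,r)) \,=\, (d^*)^{-n}\,\mu^*(g_{j^*}^n(B(x,r))) \,\leq\, (d^*)^{-n}\,\mu^*(B(g_{j^*}^n(x),\,M^n r)).
\]
Choosing the largest such $n$ yields $\mu^*(B(x,r)) \lesssim r^{\log d^*/\log M}$. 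The mass-distribution principle then gives $\dim_H(\jul(g_{j^*})) \geq \log d^*/\log M$, and $\dim_H(\jul(S)) \geq \dim_H(\jul(g_{j^*}))$ closes the bound \eqref{E:hausDim_loEst}.

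For sharpness, for each $N \geq 1$ I would take $S_N := \langle z^d,\,R_1,\ldots,R_{N-1}\rangle$, where $d \geq 2$ and the $R_k(z) = e^{i\theta_k}z$ are distinct non-identity rotations; the generating set then has $N$ elements, $\deg(g_1)=d\geq 2$, and no $g_j$ has critical points on the unit circle. Every word in $S_N$ is either a rotation or of the form $cz^{d^m}$ with $|c|=1$, and such a family is normal on $\pro \setminus S^1$ (converging locally uniformly to $0$ on $\{|z|<1\}$ and to $\infty$ on $\{|z|>1\}$), so $\jul(S_N) = S^1$ and $\dim_H(\jul(S_N)) = 1$. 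On $S^1$, $|(z^d)'| = d$ and $|R_k'| = 1$, so $M = d = \max_j \deg(g_j)$, whence $\log d/\log M = 1$ and \eqref{E:hausDim_loEst} holds with equality. The main technical obstacle in the plan is the injectivity-scale bookkeeping in the mass-distribution step: the no-critical-points-in-$\jul(S)$ hypothesis is precisely what supplies, via Koebe-type distortion on the compact set $\jul(g_{j^*})$, the uniform injectivity radius that lets the estimate persist up to $n \asymp \log(1/r)/\log M$.
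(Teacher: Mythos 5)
Your part~(a) argument rests on a false premise. You invoke ``forward invariance'' of the Julia set, $g_j(\jul(S))\subseteq\jul(S)$, attributing it to Hinkkanen--Martin, but that is not what they prove and it fails for rational semigroups in general. What holds (see \cite[Theorem~2.1]{hinkMartin:dsrf-I96}) is \emph{backward} invariance, $g^{-1}(\jul(S))\subseteq\jul(S)$ for every $g\in S$, and since each $g$ is surjective this gives the \emph{opposite} inclusion $\jul(S)\subseteq g(\jul(S))$. Concretely, for $S=\langle z^2,\,z^2/4\rangle$ one has $\jul(S)=\{1\le|z|\le 4\}$, while $z^2$ maps $\{|z|=4\}$ to $\{|z|=16\}\not\subset\jul(S)$. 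So the hypothesis of part~(a) does not collapse to $\jul(S)=\pro$, and your argument does not establish~(a). The paper's argument is of a different nature: each $g_j$ is Lipschitz on the compact manifold $\pro$, so if $\dim_H(\jul(S))<2$ then $\jul(S)$ and every $g_j(\jul(S))$ are Lebesgue-null, and a finite union of null sets cannot equal $\pro$.

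Your part~(b) lower bound is correct and takes a genuinely different, more economical route than the paper. You pass to the single generator $g_{j^*}$ of maximal degree $d^*$, use $\jul(g_{j^*})\subseteq\jul(S)$ together with monotonicity of $\dim_H$, and then run a Brolin--Lyubich mass-distribution estimate (essentially Garber's argument for one map); since $M^*:=\sup_{\jul(g_{j^*})}|g_{j^*}'|\le M$, the resulting bound $\dim_H(\jul(g_{j^*}))\ge\log d^*/\log M^*\ge\log d^*/\log M$ yields~\eqref{E:hausDim_loEst}. One correction of language: the absence of critical points of $g_{j^*}$ on $\jul(g_{j^*})$ does \emph{not} make $g_{j^*}$ hyperbolic (parabolic and Siegel examples have critical points off the Julia set but are not hyperbolic); what the hypothesis actually supplies, by compactness, is a uniform injectivity radius and a uniform Lipschitz bound for $g_{j^*}$ on a neighbourhood of $\jul(g_{j^*})$, and that is all your estimate uses. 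The paper does not pass to one generator; instead it introduces the weighted correspondences $\corr(k)=\sum_{j<N}{\sf graph}(g_j)+k\cdot{\sf graph}(g_N)$, proves a Garber-type bound for each equilibrium measure $\mu(k)$ (whose support lies in $\jul(S)$ by Theorem~\ref{T:rep-fixpts_corr} and Result~\ref{R:hinkMart_rep-pts}), obtaining $\dim_H(\jul(S))\ge\log\big(d_1(\corr(k))/d_0(\corr(k))\big)/\log M$, and then lets $k\to\infty$ so that $d_1/d_0\to\deg(g_N)=d^*$. The paper's longer route is deliberate: it produces dimension information about the family of equilibrium measures $\mu(k)$ and stays entirely within the correspondence formalism that is the point of the paper, whereas your reduction bypasses that formalism and says nothing about $\mu_\gen$, though it gets the stated bound with less machinery. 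Your sharpness example $\langle z^d,R_1,\dots,R_{N-1}\rangle$ with rotations is fine and is a minor variant of the paper's $\langle z^{d_1},\dots,z^{d_N}\rangle$.
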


\begin{remark}
Under the circumstances of part~$(b)$, one can always assume \eqref{E:J_in-C} by choosing suitable coordinates\,---\,as
discussed above. Also observe: in view of Result~\ref{R:hinkMart_rep-pts},
\eqref{E:hausDim_loEst} tells us that
$\dim_H(\jul(S)) > 0$ for $S$ with the stated properties.
\end{remark}

We must mention here that this
is not the only work wherein one uses an {\em auxiliary} dynamical system\,---\,the iteration of $\corr_\gen$ in our
case\,---\,to study finitely generated rational semigroups. In \cite{sumi:spmrfgrs00}, Sumi studies a
certain class of rational semigroups by associating to them a natural skew-product map. This is 
the class of finitely generated rational semigroups wherein each $S$
admits a generating set $\gen = \{g_1,\dots, g_N\}$
such that:
\begin{align*}
 (*) \; \;  \text{\em EITHER} \ &\deg(g_j)\geq 2 \; \; \forall j = 1,\dots N, \\
 \text{\em OR} \ &\deg(g_j)\geq 2  \ \text{for at least one $j$, $1\leq j\leq N$ {\em and} $\fat(H)\supset 
 \jul(S)$,} \\
  &\,\text{where $H$ is the rational semigroup $H = \{f^{-1} : f\in S \ \text{and} \ \deg(f) = 1\}$.}
\end{align*}

Several of the works cited above\,---\,also see Sumi--Urba{\'n}ski
\cite{sumiUrbanski:mdJss-hrs11, sumiUrbanski:BpHders12}\,---\,provide some estimates for 
$\dim_H(\jul(S))$ under various conditions on $S$.
In most of these works, lower bounds are provided for semigroups that are expanding (see
\cite[Section~1]{sumi:dJsers05} for a definition)
and satisfy one of several forms of an {\em open set condition}. Whenever $S$ has at least one element
of degree\,$\geq 2$, all forms of the open set condition imply, given that $S = \langle g_1, g_2,\dots, g_N\rangle$, the
following condition (see \cite[Lemma~5.2]{sumi:dJsers05}, for instance):
\begin{equation}\label{E:partitn}
 (\text{Separation Condition}) \qquad\qquad 
 g_j^{-1}(\jul(S))\cap g_k^{-1}(\jul(S))\,=\,\varnothing \; \;
 \text{for $j\neq k$.}
\end{equation}
In summary, the two distinct types of conditions on the semigroup $S$, provided at least one of
its generators is of degree\,$\geq 2$, under which the aforementioned works provide a lower
bound on $\dim_H(\jul(S))$ are:
\begin{itemize}
 \item $S$ satisfies the separation condition \eqref{E:partitn} {\em AND} Condition~$(*)$ above;
 \item $S$ satisfies some form of an open set condition {\em AND} is expanding. 
\end{itemize}
The proof of Theorem~\ref{T:hausDim} involves studying the iterates of the correspondence
$\corr_\gen$ associated to $(S, \gen)$. The utility of this is that it has {\em no
need} for a condition such as \eqref{E:partitn}. As for the condition that $S$ be expanding
(in the sense of \cite{sumi:HdJshrs98, sumi:dJsers05}): it follows from the definition\,---\,see
\cite[Section~1]{sumi:dJsers05}, for instance\,---\,that there exist constants $C > 0$ and $\lambda > 1$
such that, with $(S, \gen)$ as in Definition~\ref{D:assoc}:
\[
 \text{for each word $w$,} \; \; |w|_\gen = m\,\Rightarrow\,
 \inf\nolimits_{x\in \jul(S)}\|w^\prime(x)\|_{{\rm sph}}\,\geq\,C\lambda^m,
\]
where we use $\|\bcdot\|_{{\rm sph}}$ to denote the magnitude of the derivative relative to the spherical metric.
Thus, the case wherein $S$ is 
expanding\,---\,which is addressed in
\cite{sumi:dJsers05, sumiUrbanski:mdJss-hrs11, sumiUrbanski:BpHders12}\,---\,is
subsumed by our condition in Theorem~\ref{T:hausDim}. Moreover, our bound for $\dim_H(\jul(S))$ 
is sharp.
\smallskip

\section{Technical Preliminaries}\label{S:prelims}

In this section, we shall define and elaborate upon a couple of concepts that were mentioned merely in passing
in Section~\ref{S:intro}. We begin with a note on language: given a complex manifold $X$, the phrase
{\em holomorphic correspondence on $X$} will refer to any holomorphic correspondence of $X$ onto itself.

\subsection{The composition of two holomorphic correspondences}\label{SS:compo}
Although the focus of this work is correspondences of the form \eqref{E:assoc}, we shall not restrict ourselves
to these forms in the following series of definitions. This is because\,---\,with $X_1 = X_2$\,($= X$) as in
Section~\ref{S:intro} and $\corr$ a correspondence of the form \eqref{E:stdForm}\,---\,it will be crucial (especially
in the proof of Theorem~\ref{T:hausDim}) to understand the need/role of the coefficients $m_j$.
\smallskip

Observe that $\corr$ gives an obvious {\em relation} on $X$: we shall denote it as $|\corr|$, where,
assuming the representation \eqref{E:stdForm}, 
\[
 |\corr|\,:=\,\cup_{j=1}^N\Gamma_j.
\]
I.e., $|\corr|$ is just the set underlying the object $\corr$. Now consider two holomorphic correspondences,
determined by the $k$-chains
\[
 \corr^1\,=\,\sum_{j=1}^{N_1} m_{1,\,j}\Gamma_{1,\,j}, \qquad 
 \corr^2\,= \,\sum_{j=1}^{N_2} m_{2,\,j}\Gamma_{2,\,j},
\]
in $X\times X$. The set underlying the composition $\corr^2\circ\corr^1$ is, in fact, the {\em classical}
composition of the relation $|\corr^{2}|$ with the relation $|\corr^{1}|$. If we denote the classical composition
by $\star$, then recall that
\begin{equation}\label{E:classic}
 |\corr^2|\star|\corr^1|\,:=\,\{(x,z)\in X\times X: \exists y\in X \ \text{s.t.} \
                                (x,y)\in |\corr^1|, \ (y,z)\in |\corr^2|\}.
\end{equation}
However, the object $\corr^2\circ\corr^1$ must include additional numerical data. To determine these,
we first note that the $k$-chains $\corr_1$, $\corr_2$ have the alternative representations
\begin{equation}\label{E:alt}
 \corr^1\,=\,\Add{1\leq j\leq L_1}\!\IVar_{1,\,j}, \qquad
 \corr^2\,=\,\Add{1\leq j\leq L_2}\!\IVar_{2,\,j},
\end{equation}
where the primed sums indicate that the irreducible subvarieties 
$\IVar_{s,\,j}, \ j=1,\dots,L_s$, are {\em not necessarily distinct} and are repeated according
to the coefficients $m_{s,\,j}$.
\smallskip

Firstly, the $k$-chain $\IVar_{2,\,l}\circ\IVar_{1,\,j}$ is determined by the following two conditions:
\begin{align}
 |\IVar_{2,\,l}\circ\IVar_{1,\,j}|\,=\,&\{(x,z)\in X\times X: \exists y\in X \ \text{s.t.} \
								(x,y)\in \IVar_{1,\,j}, \ (y,z)\in \IVar_{2,\,l}\}, \label{E:compos1}\\
 \IVar_{2,\,l}\circ\IVar_{1,\,j}\,\equiv\,&\sum\nolimits_{1\leq s\leq N(j,\,l)}\!\nu_{s,\,jl}Y_{s,\,jl}, \notag
\end{align}
where the $Y_{s,\,jl}$'s are the distinct irreducible components of the subvariety on the right-hand side of
\eqref{E:compos1} (which is the relation $|\IVar_{2,\,l}|\star|\IVar_{1,\,j}|$), 
and $\nu_{s,\,jl}\in \zahl_+$ is the generic number $y$'s as $(x,z)$ 
varies through $Y_{s,\,jl}$ for which the membership conditions on the right-hand side of 
\eqref{E:compos1} are satisfied. We now define:
\begin{equation}\label{E:compos2}
 \corr^2\circ\corr^1\,:= \ \sum_{1\leq j\leq L_1}\;\sum_{1\leq l\leq L_2}\IVar_{2,\,l}\circ\IVar_{1,\,j}.
\end{equation}

If $\corr^1$ and $\corr^2$ are two holomorphic correspondences on $X$, then it is well known that so is
$\corr^2\circ\corr^1$. This is especially obvious in the case of two correspondences of the form
\eqref{E:assoc}. Furthermore, correspondences of this form reveal why {\em it is desirable that the coefficients
$m_1,\dots, m_N$ form a part of the data} defining a correspondence in the sense of
Definition~\ref{D:holCorr}. To see this, consider a rational semigroup
$S\,=\,\langle g_1,\dots, g_N\rangle$ that is not freely generated. Suppose, for example, there exists
a relation
\[
 g_2\circ g_1\,=\,g_\mu\circ g_\nu, \; \; 1\leq \mu, \nu\leq N, \; \mu\neq 2, \ \text{and} \ \nu\neq 1.
\]
With  $\corr_\gen$ being the correspondence associated to $(S, \gen)$, $\gen = \{g_1,\dots, g_N\}$, we have
by definition:
\begin{equation}\label{E:compo}
 \corr_\gen\circ\corr_\gen = \sum_{1\leq j,\,l\leq N}{\sf graph}(g_l\circ g_j).
\end{equation}
Note that, due to the aforementioned relation, the irreducible variety ${\sf graph}(g_2\circ g_1)$ occurs with
multiplicity at least two in the above expression. We shall soon see that we {\em do want} to keep a
count of this multiplicity. This phenomenon illustrates why one requires the coefficients $m_1,\dots, m_N$,
in the sense of \eqref{E:stdForm}, to form a part of the data defining a holomorphic correspondence.  It is the
presence of these multiplicity data that necessitates a definiton that calls for more
than {\em merely} composing $|\corr^1|$ and $|\corr^2|$ as relations.

\subsection{The numbers $\boldsymbol{d_1(\corr)}$ and $\boldsymbol{d_0(\corr)}$}\label{SS:degrees}
The {\em topological degree} of a holomorphic correspondence is the generic number of preimages
of a point, counted according to multiplicity. To be more precise, we first recall that, with the
representation \eqref{E:stdForm} for $\corr$, it is classical that there is a Zariski-open set $\OM\subset X_2$ such that
$(\pi_2^{-1}(\OM)\cap \Gamma_j, \OM, \pi_2)$ is a $\Delta_j$-sheeted holomorphic covering for some
$\Delta_j\in \zahl_+$, $j = 1,\dots, N$.
The topological degree of $\corr$ is defined as
\begin{equation}\label{E:d_top}
 {\rm deg}_{top}(\corr)\,:=\,\sum_{j=1}^N m_j\Delta_j.
\end{equation}
When $\dim_{\cplx}(X_1) = \dim_{\cplx}(X_2) = 1$, ${\rm deg}_{top}(\corr)$ is abbreviated to
$d_1(\corr)$ (this notation is meant to evoke the dynamical degrees $d_i(\corr)$ of $\corr$,
$i = 0, 1,\dots, \dim_{\cplx}(X_2)$,
which were defined in \cite{dinhSibony:dvtma06} but whose definitions are {\em not essential} to
understand the results herein).
\smallskip

Let $X$ be a compact Riemann surface and let $\corr$ be a holomorphic correspondence on $X$.
Keeping in mind the issue of multiplicity discussed in the previous sub-section, we find that
\eqref{E:d_top} is the ``correct'' way to count the number of pre-images of generic points: i.e., we get the
following simple formula (here $\corr^{\circ n}$ denotes the $n$-fold iterated composition of $\corr$).

\begin{proposition}\label{P:degreeSeq}
Let $X$ be a compact Riemann surface and let $\corr$ be a holomorphic correspondence on 
$X$. Then $d_1(\corr^{\circ n})=d_1(\corr)^n \ 
\forall n\in \zahl_+$.
\end{proposition}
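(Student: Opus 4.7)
The plan is to prove the multiplicative identity
\[
d_1(\corr^2\circ\corr^1)\,=\,d_1(\corr^1)\cdot d_1(\corr^2)
\]
for any two holomorphic correspondences $\corr^1,\corr^2$ on $X$; Proposition~\ref{P:degreeSeq} then follows by a one-line induction on $n$. My first step is to use the expanded representations \eqref{E:alt}, i.e.\ $\corr^s=\Add{1\le j\le L_s}\IVar_{s,j}$, together with the definition \eqref{E:compos2}, to write
\[
d_1(\corr^2\circ\corr^1)\,=\,\sum_{j=1}^{L_1}\sum_{l=1}^{L_2} d_1(\IVar_{2,l}\circ\IVar_{1,j}).
\]
The task is thereby reduced to proving that, for each irreducible pair, $d_1(\IVar_{2,l}\circ\IVar_{1,j})=\Delta_j^{(1)}\Delta_l^{(2)}$, where $\Delta_j^{(s)}$ denotes the generic sheet number of $\pi_2$ restricted to $\IVar_{s,j}$.

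For this I would fix a Zariski-open $\OM\subseteq X$ over which every projection $\pi_2|_{\IVar_{s,j}}$ and $\pi_2|_{Y_{s,jl}}$ (with the $Y_{s,jl}$'s as in \eqref{E:compos1}) is an unramified covering of the stated sheet number, over which the generic multiplicity $\nu_{s,jl}$ is realized at every point, and over which every preimage of any $z\in\OM$ under $\pi_2|_{\IVar_{2,l}}$ already lies in the non-critical locus of $\pi_2|_{\IVar_{1,j}}$. Since the loci where any of these conditions fail are proper analytic subsets of $X$, such an $\OM$ exists. The crux is then a double count, for a fixed $z\in\OM$, of the finite set
\[
T(z)\,:=\,\{(x,y)\in X\times X\,:\,(x,y)\in\IVar_{1,j},\ (y,z)\in\IVar_{2,l}\}.
\]
On the one hand, first choosing $y$ (giving $\Delta_l^{(2)}$ options) and then $x$ (giving $\Delta_j^{(1)}$ options for each $y$) shows $\#T(z)=\Delta_j^{(1)}\Delta_l^{(2)}$. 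On the other hand, grouping the pairs in $T(z)$ according to the image point $(x,z)\in|\IVar_{2,l}\circ\IVar_{1,j}|$\,---\,which by genericity lies on a unique $Y_{s,jl}$ and contributes exactly $\nu_{s,jl}$ intermediate $y$'s\,---\,gives $\#T(z)=\sum_{s}\nu_{s,jl}\,\Delta(Y_{s,jl})=d_1(\IVar_{2,l}\circ\IVar_{1,j})$, by \eqref{E:d_top} applied to the correspondence $\IVar_{2,l}\circ\IVar_{1,j}$.

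Equating the two sides and summing over $(j,l)$ then yields
\[
d_1(\corr^2\circ\corr^1)\,=\,\bigg(\sum_{j=1}^{L_1}\Delta_j^{(1)}\bigg)\bigg(\sum_{l=1}^{L_2}\Delta_l^{(2)}\bigg)\,=\,d_1(\corr^1)\cdot d_1(\corr^2),
\]
the last equality being just \eqref{E:d_top} rewritten in the expanded notation of \eqref{E:alt}. No step is conceptually deep; the only point requiring care\,---\,and what I expect to be the main obstacle in the write-up\,---\,is verifying that the Zariski-open set $\OM$ can indeed be chosen to simultaneously avoid the branch loci of all projections involved, the images of intersections between distinct $Y_{s,jl}$'s, and the bad $z$'s whose $\IVar_{2,l}$-preimages lie on the critical locus of $\pi_2|_{\IVar_{1,j}}$. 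Once this is in place, the double count is unambiguous and the rest is bookkeeping.
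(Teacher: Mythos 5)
Your proof is correct and takes essentially the same route as the paper's: both hinge on counting preimages of a generic point and observing, thanks to the absence of vertical/horizontal components, that those preimages are themselves generic, so the count is multiplicative. What you add is the explicit double count of $T(z)$, which verifies that the multiplicities $\nu_{s,\,jl}$ built into the definition of composition are exactly those needed for $d_1$ to be multiplicative\,---\,a point the paper's proof leaves implicit in the phrase ``essentially the same reasoning used for holomorphic maps applies.''
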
 

\noindent{Essentially the same reasoning used for holomorphic maps applies even when $\corr$ is not
the graph of a map. Let $w\in X$ be a generic point in the sense of the discussion preceding 
\eqref{E:d_top}. Since $\corr$ has no irreducible components of the form
$\{a\}\times X$ or $X\times\{a\}$, $a\in X$, all the preimages of each such $w$\,---\,with,
perhaps, the exception of finitely many points\,---\,are generic in the above sense.
Hence, ${\rm deg}_{top}$ is multiplicative under composition of correspondences.}
\smallskip

For $X_1$, $X_2$ and $\corr$ as in Section~\ref{S:intro}, the {\em adjoint} of $\corr$ is the correspondence
(assuming, once again, the representation \eqref{E:stdForm} for $\corr$)
\begin{align}
 \acorr\,&:= \ \sum_{j=1}^{N} m_j\aGa{j}, \notag \\
 \text{where} \ \aGa{j}\,&:= \ \{(y,x)\in X_2\times X_1: (x,y)\in \Gamma_j\}. \notag
\end{align}
When $\dim_{\cplx}(X_1) = \dim_{\cplx}(X_2) = 1$, define $d_0(\corr) := d_1(\acorr)$.
\medskip

\section{Essential notations and lemmas}\label{S:ess}

This section is dedicated to various lemmas needed to prove
Theorems~\ref{T:distrbn_rep-fixpts} and \ref{T:hausDim}.
\smallskip

We will need an area-diameter comparison for $\pro$-valued maps that are
holomorphic on the open unit disc $\dee\subset \cplx$, which is due to Briend and
Duval \cite{briendDuval:dcmeeP01}.
In what follows, $D(a; r)$ is the open disc of radius $r$ centered at $a\in 
\cplx$; for a set $A\subset \pro$, its diameter $\di(A)$ is with respect to the spherical distance; and,
for any of the maps $\phi$ appearing in the following result,
\[
 \are(\phi(\OM))\,:=\,\int_{\OM}\phi^*\FS,
\]
where $\OM$ is any open subset of $\dee$,  and $\FS$ denotes the normalised Fubini--Study
form on $\pro$.

\begin{result}[paraphrasing Lemme~3.6 of \cite{dinh:sammcl05}]\label{R:are-dia}
There exists a constant $\delta_0 > 0$ such that for any injective holomorphic map
$\phi: \dee\lrarw \pro$ satisfying $\are(\phi(\dee))\leq \delta_0$, the following
holds: given any $r\in (0,1)$, there exists a constant $c(r) > 0$\,---\,which
is independent of $\phi$\,---\,such that
\[
 \di\big(\phi(D(0; r))\big) \leq c(r)\sqrt{\are\big(\phi(D(0; r))\big)}.
\]
\end{result}

\begin{remark}
Briend--Duval give a  more general form of the above result, for $\mathbb{P}^k$-valued maps,
in \cite{briendDuval:dcmeeP01}. However, we shall use the formulation given in \cite[Lemme~3.6]{dinh:sammcl05}
by Dinh.
\end{remark}

As in subsection~\ref{SS:compo},
we shall, for a part of this section, set up our notation and constructions for general holomorphic
correspondences on a compact Riemann surface $X$\,---\,even though we shall eventually discuss
correspondences of the form \eqref{E:assoc}. This allows us to better relate what we need here
to the notation in the works to which we need to refer.
\smallskip

If $\omega_X$ is a K{\"a}hler form on $X$ with $\int_X\omega_X = 1$,
it follows from the change-of-variables formula for branched coverings that for a holomorphic
correspondence $\corr$ on $X$:
\begin{equation}\label{E:integDegree}
 d_1(\corr)\,=\,\sum_{j=1}^N\,m_j\!\int_{{\sf reg}(\Gamma_j)}\big(\left. \pi_2\right|_{\Gamma_j}\big)^*\omega_X
 \; \; \; \text{and} \; \; \;
 d_0(\corr)\,=\,\sum_{j=1}^N\,m_j\!\int_{{\sf reg}(\Gamma_j)}\big(\left. \pi_1\right|_{\Gamma_j}\big)^*\omega_X;
\end{equation}
also see
\cite[equation~(3.1)]{dinhSibony:dvtma06} together with \cite[Section~3.5]{dinhSibony:dvtma06}.
\smallskip

We have almost all the ingredients needed to state a lemma that will play a key role in the proof of
Theorem~\ref{T:distrbn_rep-fixpts}. To this end, we need to fix a couple of basic notions: firstly,
given a non-empty open set $U\subset X$, we will call a map $\gamma: U\lrarw X$ {\em a regular
inverse branch of $\corr$} if there exists some $j$, $1\leq j\leq N$, and some irreducible component
$\mathfrak{C}$ of $\pi_2^{-1}(U)\cap \Gamma_j$\,---\,viewed as an analytic subset of
$X\times U$\,---\,such that
\begin{itemize}
 \item $\left.\pi_2\right|_{\mathfrak{C}}$ is a bijection onto $U$;
 \item $\gamma = \pi_1\circ\big(\left.\pi_2\right|_{\mathfrak{C}}\big)^{-1}$; and
 \item $\gamma$ is a biholomorphism.
\end{itemize}
Secondly, since we will often need to count according to multiplicity,
we must draw a distinction between lists and sets. These are the notational rules that we shall
follow:
\begin{itemize}
 \item A collection denoted by $A^\blot$ will represent a list; the objects in $A^\blot$ will be 
 repeated according to multiplicity. Also, $A$ will denote the set underlying $A^\blot$.
 \item The notation $\sharp(A^\blot)$ will denote the number of objects in $A^\blot$, counted
 according to multiplicity. Cardinality will be denoted by ${\sf Card}$.
\end{itemize}

\begin{lemma}\label{L:many_branches}
Let $\{\corr_n\}$ be a sequence of holomorphic correspondences on $\pro$ such that, for some $\eps\in (0, 1)$,
and for some open set $U\varsubsetneq \pro$ biholomorphic to $\dee$, we have that $U$ admits at least
$(1-\eps/2)d_1(\corr_n)$ regular inverse branches of $\corr_n$, counting according to multiplicity, for each $n\in \zahl_+$.
Furthermore, assume that
$\lim_{n\to \infty} d_0(\corr_n)/d_1(\corr_n) = 0$. Then:
\begin{itemize}[leftmargin=16pt]
 \item[$a)$] There exists a constant $K_\eps > 0$ such that for all $n\in \zahl_+$ and for at
 least $(1-\eps)d_1(\corr_n)$ of the aforementioned inverse branches of $\corr_n$, the areas of the images of $U$ are
 at most $K_\eps d_0(\corr_n)/d_1(\corr_n)$.
 \item[$b)$] Fix $W\Subset U$. If $\bran{n}{{}}$ is one of the inverse branches of $\corr_n$ mentioned in $(a)$, then
 \[
  \di(\bran{n}{{}}(W))\,=\,O\big(\,\sqrt{d_0(\corr_n)/d_1(\corr_n)}\,\big) \; \; \text{as $n\to \infty$}.
 \]
\end{itemize}
\end{lemma}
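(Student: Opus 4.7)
The plan is to establish $(a)$ via a sum-of-areas bound derived from the second identity in \eqref{E:integDegree} coupled with a pigeonhole argument, and then to deduce $(b)$ by pre-composing each resulting branch with a biholomorphism $\dee\to U$ and invoking Result~\ref{R:are-dia}.

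For $(a)$, write $\corr_n = \sum_j m_{n,j}\Gamma_{n,j}$, and let $\mathcal{B}_n^\blot$ denote the list (with multiplicity) of regular inverse branches of $\corr_n$ on $U$, so that $\sharp(\mathcal{B}_n^\blot)\geq (1-\eps/2)\,d_1(\corr_n)$ by hypothesis. For each $\gamma\in\mathcal{B}_n^\blot$, coming from an irreducible component $\mathfrak{C}\subset \pi_2^{-1}(U)\cap \Gamma_{n,j}$ on which $\pi_2$ restricts to a biholomorphism, the identity $\gamma = \pi_1\circ (\pi_2|_{\mathfrak{C}})^{-1}$ and the usual change of variables give $\are(\gamma(U)) = \int_U \gamma^{*}\FS = \int_{\mathfrak{C}}(\pi_1|_{\mathfrak{C}})^{*}\FS$. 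Summing over $\mathcal{B}_n^\blot$, and noting that for fixed $j$ the graph components $\mathfrak{C}$ each lie inside ${\sf reg}(\Gamma_{n,j})$ (being biholomorphic to $U$) and intersect pairwise in only finite sets, the second formula in \eqref{E:integDegree} applied with $\omega_X = \FS$ yields
\[
\sum_{\gamma\in\mathcal{B}_n^\blot}\are(\gamma(U))\,\leq\,\sum_j m_{n,j}\!\int_{{\sf reg}(\Gamma_{n,j})}(\pi_1|_{\Gamma_{n,j}})^{*}\FS\,=\,d_0(\corr_n).
\]
Now set $K_\eps := 2/\eps$, and let $\mathcal{B}_n^{\rm bad}\subset \mathcal{B}_n^\blot$ be the sub-list of those $\gamma$ with $\are(\gamma(U)) > K_\eps\, d_0(\corr_n)/d_1(\corr_n)$. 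The display above forces $\sharp(\mathcal{B}_n^{\rm bad}) < d_1(\corr_n)/K_\eps = (\eps/2)d_1(\corr_n)$, so at least $(1-\eps/2)d_1(\corr_n) - (\eps/2)d_1(\corr_n) = (1-\eps)d_1(\corr_n)$ branches satisfy the area bound claimed in $(a)$.

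For $(b)$, fix a biholomorphism $\psi:\dee\to U$ and, using $W\Subset U$, pick $r\in(0,1)$ with $\psi^{-1}(W)\subset D(0;r)$. Given a branch $\gamma = \bran{n}{}$ as in $(a)$, set $\phi := \gamma\circ\psi:\dee\to \pro$; this is injective and holomorphic with $\are(\phi(\dee)) = \are(\gamma(U)) \leq K_\eps\, d_0(\corr_n)/d_1(\corr_n)$. Since $d_0(\corr_n)/d_1(\corr_n)\to 0$, we have $\are(\phi(\dee))\leq \delta_0$ for all sufficiently large $n$, so Result~\ref{R:are-dia} applies and gives
\[
\di(\phi(D(0;r)))\,\leq\,c(r)\sqrt{\are(\phi(D(0;r)))}\,\leq\,c(r)\sqrt{K_\eps}\,\sqrt{d_0(\corr_n)/d_1(\corr_n)}.
\]
As $\gamma(W)\subset\phi(D(0;r))$, this is precisely the bound asserted in $(b)$.

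The only substantive step is the book-keeping in $(a)$: one must check that the sum of $\are(\gamma(U))$ taken over the list $\mathcal{B}_n^\blot$ is genuinely controlled by $d_0(\corr_n)$ in the sense of \eqref{E:integDegree}. This reduces to verifying that each graph-like component $\mathfrak{C}$ sits in ${\sf reg}(\Gamma_{n,j})$ and that, over a common $\Gamma_{n,j}$, any two distinct such components meet in only finitely many points, so the integrals are additive and bounded by the integral over the entire regular locus. Once this inequality is in place, the pigeonhole in $(a)$ is a one-liner, and $(b)$ is a direct application of Result~\ref{R:are-dia}.
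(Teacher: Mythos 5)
Your proof follows essentially the same route as the paper: for $(a)$ you bound $\sum_\gamma \are(\gamma(U))$ by $d_0(\corr_n)$ via the second identity in \eqref{E:integDegree} (the paper writes this out with the characteristic function $\chaf{U}$ and the graph currents, but it is the same change-of-variables observation), then apply pigeonhole with $K_\eps = 2/\eps$; for $(b)$ you precompose with a Riemann map $\psi:\dee\to U$, choose $r$ with $\psi^{-1}(W)\subset D(0;r)$, and invoke Result~\ref{R:are-dia} exactly as the paper does. The only slight wrinkle is your parenthetical justification that $\mathfrak{C}\subset{\sf reg}(\Gamma_{n,j})$ ``being biholomorphic to $U$'' --- a smooth component can still pass through singular points of $\Gamma_{n,j}$ --- but since that bad locus is finite it does not affect the integrals, and you already note the finite-intersection point, so the argument stands.
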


\begin{remark}
The above lemma may be viewed as a version\,---\,suitably reformulated for sequences of correspondences\,---\,of
a lemma in \cite[Section~1]{briendDuval:dcmeeP01}. Areas and diameters occurring in the above lemma are as
described prior to the statement of Result~\ref{R:are-dia}.
\end{remark}

\begin{proof}[The proof of Lemma~\ref{L:many_branches}]
Let
\[
 \sum_{j=1}^{N(n)} m_{n,\,j}\Gamma_{n,\,j}, \; \; \; n = 1, 2, 3,\dots
\]
be the representations, in the form \eqref{E:stdForm}, of the correspondences $\corr_n$, and let
$\fami(n)^\blot$ denote the {\em list} of inverse branches of $\corr_n$ given by our hypothesis.
Then (in the following, we abbreviate the set $\{(z,w): z = \bran{n}{{}}(w), w\in U\}$ as ${\sf gr}(\bran{n}{{}})$, and
write $\chaf{U}$ for the characteristic function of $U$)
\begin{align}
 \sum_{\bran{n}{{}}\in \fami(n)^\blot}\are(\bran{n}{{}}(U))\,&=\,
 \sum_{\bran{n}{{}}\in \fami(n)^\blot}\int_{U}\bran{n}{{}}^*\FS \notag \\
 &=\,\sum_{\bran{n}{{}}\in \fami(n)^\blot}\int_{{\sf gr}(\bran{n}{{}})}
 	\left(\left. \pi_2\right|_{{\sf gr}(\bran{n}{{}})}\right)^*(\chaf{U})
 	\left(\left. \pi_1\right|_{{\sf gr}(\bran{n}{{}})}\right)^*(\FS) \notag \\
 &\leq\,\sum_{1\leq j\leq N(n)}\!\!\!\!m_{n,\,j}\!\int_{{\sf reg}(\Gamma_{n,\,j})}
 	\big(\left.\pi_2\right|_{\Gamma_j}\big)^*(\chaf{U})\,\big(\left. \pi_1\right|_{\Gamma_j}\big)^*(\FS) \notag \\
 &\leq\,\sum_{1\leq j\leq N(n)}\!\!\!\!m_{n,\,j}\!\int_{{\sf reg}(\Gamma_{n,\,j})}
 	\big(\left. \pi_1\right|_{\Gamma_j}\big)^*(\FS)\,=\,d_0(\corr_n). \label{E:area_bran}
\end{align}
Here, the first inequality above is due to the fact that the union of the graphs of every $\bran{n}{{}}\in \fami(n)$ is
a subset of $(X\times U)\cap |\corr_n|$. The final equality above is one of the identities in \eqref{E:integDegree}.
Let $M(n)$ denote the number of branches $\bran{n}{{}}\in \fami(n)^\blot$ such that
\[
 \are(\bran{n}{{}}(U))\,\geq\,2\eps^{-1}d_0(\corr_n)/d_1(\corr_n).
\]
By \eqref{E:area_bran}, we have
\[
 \frac{2}{\eps} \ \frac{d_0(\corr_n)}{d_1(\corr_n)} \ M(n)\,\leq\,d_0(\corr_n),
\]
from which $(a)$ follows, with $K_\eps = 2\eps^{-1}$.
\smallskip

Let us fix $W\Subset U$ and fix a Riemann mapping $\psi : \dee\lrarw U$. Let $\overline{\fami}(n)^\blot$ denote the
list of inverse branches of $\corr_n$ given by $(a)$, and let $\delta_0$ be as given by Result~\ref{R:are-dia}.
By the area bound given by $(a)$ and by our assumption on $d_0(\corr_n)/d_1(\corr_n)$, there exists a number
$N_\eps\in \zahl_+$ such that
\[
 \are(\bran{n}{{}}\circ\psi(\dee))\,\leq\,\delta_0 \quad 
 \forall\,\bran{n}{{}}\in \overline{\fami}(n) \; \text{and} \; \forall n\geq N_\eps.
\]
Let $r\in (0, 1)$ be such that $\psi^{-1}(W)\subset D(0;r)$. By the above inequality, we are in a position to
apply Result~\ref{R:are-dia} to $\bran{n}{{}}\circ\psi$ for each $\bran{n}{{}}\in \overline{\fami}(n)$ for $n$ sufficiently
large. With $c(r)$ as given by Result~\ref{R:are-dia}, we get
\[
 \di(\bran{n}{{}}(W))\,\leq\,c(r)\sqrt{K_\eps}\sqrt{d_0(\corr_n)/d_1(\corr_n)} \quad
 \forall\,\bran{n}{{}}\in \overline{\fami}(n) \; \text{and} \; \forall n\geq N_\eps,
\]
which establishes $(b)$. 
\end{proof}

We now turn to finitely generated rational semigroups.  Let $\gen = \{g_1,\dots, g_N\}$ be a
set of generators of such a semigroup, and let $\corr_\gen$ be the associated holomorphic
correspondence. The latter is a special case of correspondences of the form
\begin{equation}\label{E:special}
 \corr\,=\,\sum_{1\leq j\leq N}{\sf graph}(g_j),
\end{equation}
where each $g_j$ is a rational map, $g_1,\dots, g_N$ {\bf not necessarily distinct.}
We call a point in $\pro$ a {\em critical value} of
$\corr$ if it is a critical value of some $g_j$, $1\leq j\leq N$. We write
\[
 {\sf crit}(\corr)\,:=\,\text{the set of all critical values of $\corr$.}
\]
We define ${\sf crit}(\corr^{\circ n})$ in an analogous way. Finally, write
\[
 C_n\,:=\,\bigcup_{j = 1}^n{\sf crit}(\corr^{\circ j}).
\]
We can now state:

\begin{lemma}\label{L:branch-mult}
Let $\corr$ be a holomorphic correspondence of the form \eqref{E:special}.
Let $U\varsubsetneq \pro$ be a simply connected open set and assume that $U\cap C_l = \varnothing$
for some $l\in \zahl_+$. Then, counting according to multiplicity, $\corr^{\circ n}$ admits at
least
\[
 d_1(\corr)^n\bigg[1 - \left(\frac{N}{d_1(\corr)}\right)^l\!\!\tau(N+1)\bigg]
\]
regular inverse branches, where $\tau = {\sf Card}(C_1)$.
\end{lemma}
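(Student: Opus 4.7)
The plan is to count regular inverse branches of $\corr^{\circ n}$ over $U$ via a level-by-level induction. Let $B_k$ denote the total count (with multiplicity) of regular inverse branches of $\corr^{\circ k}$ over $U$: equivalently, $B_k$ counts the pairs consisting of a sequence $(j_1,\dots,j_k)\in\{1,\dots,N\}^{k}$ together with a connected component of $w^{-1}(U)$ (where $w=g_{j_k}\circ\cdots\circ g_{j_1}$) that maps biholomorphically onto $U$. If $L_{k+1}$ denotes the sheets lost in the transition from level $k$ to level $k+1$, then $B_{k+1}=d_{1}(\corr)B_{k}-L_{k+1}$, and iterating yields
\[
B_{n}\,=\,d_{1}(\corr)^{n}\,-\,\sum_{k=0}^{n-1}L_{k+1}\,d_{1}(\corr)^{n-k-1}.
\]
The plan is thus to show $L_{k+1}=0$ for $k\le l-1$ and to obtain a tight bound on $L_{k+1}$ for $k\ge l$.

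For $k\le l-1$, the image $V_{\gamma}$ of a regular branch at level $k$ is biholomorphic to $U$ via some word $w$ of length $k$, hence simply connected. I claim $V_{\gamma}\cap C_{1}=\varnothing$: any $c\in V_{\gamma}\cap C_{1}$ would satisfy $w(c)\in U$, yet $w(c)\in{\sf crit}(\corr^{\circ(k+1)})\subseteq C_{k+1}\subseteq C_{l}$, contradicting $U\cap C_{l}=\varnothing$. Since ${\sf crit}(g_{j})\subseteq C_{1}$ for every $j$, no $g_{j}$ branches over $V_{\gamma}$, so $g_{j}^{-1}(V_{\gamma})$ splits into $\deg(g_{j})$ regular inverse branches. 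This gives $L_{k+1}=0$ for $k\le l-1$, and hence $B_{l}=d_{1}(\corr)^{l}$.

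Now fix $k\ge l$. For each regular branch $\gamma$ at level $k$ with simply connected image $V_{\gamma}$, and each $j$, decompose $g_{j}^{-1}(V_{\gamma})$ into connected components $V_{1},\dots,V_{s}$ of respective degrees $d_{i}$ and ramifications $r_{i}$ over $V_{\gamma}$, satisfying $\sum_{i}d_{i}=\deg(g_{j})$. Applying Riemann--Hurwitz to the planar open $V_{i}\subset\pro$ covering the simply connected $V_{\gamma}$ yields $r_{i}\ge d_{i}-1$, and hence $d_{i}\le 2r_{i}$ for every ramified component. The sheets lost through $g_{j}$ over $V_{\gamma}$ are $\sum_{d_{i}\ge 2}d_{i}\le 2\sum_{i}r_{i}$, and the latter is at most $(\deg(g_{j})-1)|V_{\gamma}\cap C_{1}|$ (since the ramification of $g_{j}$ above a single point is at most $\deg(g_{j})-1$, and ${\sf crit}(g_{j})\subseteq C_{1}$). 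Summing over $\gamma$ and $j$ and swapping with a sum over $c\in C_{1}$ gives
\[
L_{k+1}\,\le\,2(d_{1}(\corr)-N)\sum_{c\in C_{1}}|\{\gamma:c\in V_{\gamma}\}|.
\]
The key inequality is $|\{\gamma:c\in V_{\gamma}\}|\le N^{k}$: such a $\gamma$ is determined by a length-$k$ sequence $(i_{1},\dots,i_{k})$ with $g_{i_{k}}\circ\cdots\circ g_{i_{1}}(c)\in U$, of which there are at most $N^{k}$, independent of the degrees of the generators. Thus $L_{k+1}\le 2\tau(d_{1}(\corr)-N)N^{k}$.

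Substituting into the telescoping identity and evaluating the geometric sum
\[
\sum_{k=l}^{n-1}N^{k}d_{1}(\corr)^{n-k-1}\,\le\,\frac{N^{l}d_{1}(\corr)^{n-l}}{d_{1}(\corr)-N},
\]
I would obtain $B_{n}\ge d_{1}(\corr)^{n}-2\tau N^{l}d_{1}(\corr)^{n-l}$. The hypothesis that some $\deg(g_{j})\ge 2$ forces $d_{1}(\corr)\ge N+1$, so $2\le N+1$, and the claimed bound $d_{1}(\corr)^{n}[1-(N/d_{1}(\corr))^{l}\tau(N+1)]$ follows. The principal obstacle is the ``sequence count'' inequality $|\{\gamma:c\in V_{\gamma}\}|\le N^{k}$: it is the contrast between this combinatorial count $N^{k}$ and the degree-weighted count $B_{k}\le d_{1}(\corr)^{k}$ that produces the favourable factor $(N/d_{1}(\corr))^{l}$ in the lemma.
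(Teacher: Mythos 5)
Your argument is correct, and it follows the same Lyubich--Boyd strategy that underlies the lemma, but it differs from the paper in a meaningful way: the paper does not prove the combinatorial heart of the lemma itself. Instead, it quotes the inequality
\[
 \sigma_n\,\geq\,d_1(\corr)^n - \tau N^l\sum_{j = 1}^{n- l}d_1(\corr)^jN^{n-l-j}, \qquad n\geq l+1,
\]
directly from Section~6.8 of Boyd's paper (observing that Boyd's derivation does not use the hypothesis $\deg g_j\geq 2$ for all $j$), and then merely sums the resulting geometric series. You instead re-derive a version of Boyd's inequality from scratch via the telescoping identity $B_n = D^n - \sum_{k=0}^{n-1} L_{k+1}D^{n-k-1}$, the Riemann--Hurwitz bound $d_i\leq 2r_i$ on ramified components over a simply connected base, and the crucial combinatorial observation that a fixed $c\in C_1$ lies in at most $N^k$ images $V_\gamma$ because it is the number of \emph{words}, not the total degree $D^k$, that controls this count; that last step is indeed, as you say, exactly where the factor $(N/D)^l$ comes from, and is the same pivot as in Boyd's argument. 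Your error term $2\tau N^lD^{n-l}$ is slightly different from the one obtained by summing Boyd's expression (the constants $2$ and $D/(D-N)$ do not dominate each other in general), but both are $\leq \tau(N+1)(N/D)^lD^n$, so both yield the stated bound. Two small points you should make explicit. First, you invoke ``the hypothesis that some $\deg(g_j)\geq 2$,'' but the lemma has no such hypothesis; the case $\deg(g_j)=1$ for all $j$ must be dispatched separately (it is trivial since then $\tau=0$, hence every $L_{k+1}=0$ and $B_n=D^n$ exactly, which also avoids the $D-N=0$ division in your geometric-sum step). Second, the Riemann--Hurwitz count on the open pieces $V_i\to V_\gamma$ deserves a word of justification---e.g.\ that $g_j|_{V_i}$ is proper of finite degree onto the simply connected $V_\gamma$, so $V_i$ is a planar domain of finite topological type with $\chi(V_i)\leq 1$, giving $d_i-r_i=\chi(V_i)\leq 1$; alternatively, the same bound $r_i\geq d_i-1$ follows from the monodromy being transitive on $d_i$ sheets. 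With those two clarifications your proof is a complete, self-contained replacement for the citation to Boyd.
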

\begin{proof}
A substantial part of the proof of 
the above statement occurs at the beginning of Section~6.8 of the work of Boyd
discussed in Section~\ref{S:intro}: i.e., in  \cite{boyd:imfgrs99}. Write
\begin{align*}
 \sigma_n\,:=&\,\text{the number, counting according to multiplicity, of regular} \\
 &\,\text{inverse branches of $\corr^{\circ n}$}.
\end{align*}
Since $d_1(\corr)$ is the generic number of inverse images under $\corr$, counted according to
multiplicity, the result is obvious for $n\leq l$.
The following inequality obtained by Boyd\,---\,by a variation of the argument by Lyubich in
\cite[Proposition~4]{lyubich:epreRs83}\,---\,does not require his condition that $\deg(g_j)\geq 2$
for every $j = 1,\dots, N$:
\[
 \sigma_n\,\geq\,d_1(\corr)^n - \tau N^l\sum_{j = 1}^{n- l}d_1(\corr)^jN^{n-l-j}, \; \; \; n\geq l+1.
\]
We are done if $\deg(g_j) = 1$ for each $j = 1,\dots, N$. Thus, assume that $\deg(g_j)\geq 2$
for some $j = 1,\dots, N$. Then for the right-hand side of the above inequality, we have
\begin{align*}
 d_1(\corr)^n - \tau N^l\sum_{j = 1}^{n - l}d_1(\corr)^jN^{n-l-j}\,&\geq\,
 d_1(\corr)^n\bigg[1 - \left(\frac{N}{d_1(\corr)}\right)^l\!\!\frac{\tau}{1- N/d_1(\corr)}
  \bigg] \\
  &\geq\,d_1(\corr)^n\bigg[1 - \left(\frac{N}{d_1(\corr)}\right)^l\!\!\tau(N+1)\bigg].
\end{align*}
\end{proof}
\smallskip

\section{The proof of Theorem~\ref{T:distrbn_rep-fixpts}}\label{S:distrbn_rep-fixpts}

This section is divided into two subsections. Before we give the proof of Theorem~\ref{T:distrbn_rep-fixpts},
we must take a closer look at the equilibrium measure of Dinh--Sibony. This will be the focus of the first subsection. In that
subsection, we shall also present a lemma involving the equilibrium measure, which will be needed in our
proof of Theorem~\ref{T:distrbn_rep-fixpts}.
\smallskip

\subsection{More about the  equilibrium measure of Dinh--Sibony}\label{SS:DSM}
Since some readers might not be familiar with the results of \cite{dinhSibony:dvtma06}\,---\,which
address a much more general (and multi-dimensional) set-up than ours\,---\,we shall not merely
refer the reader to the relevant sections of \cite{dinhSibony:dvtma06}. We shall paraphrase the relevant results,
adapting them to the following objects:  the normalised Fubini--Study form $\FS$ on $\pro$, and to holomorphic
correspondences on $\pro$. We will need two results to establish \eqref{E:asymp1}
above, which is the key fact about the equilibrium measure.

\begin{result}[Th{\'e}or{\`e}me~5.1 of \cite{dinhSibony:dvtma06} paraphrased for
correspondences on $\pro$]\label{R:1stRes}
Let $\corr_n$, $n\in \zahl_+$, be holomorphic correspondences on $\pro$. Suppose that the series
\[
\sum\nolimits_{n\in \zahl_+}(d_0(\corr_1)/d_1(\corr_1))\dots(d_0(\corr_n)/d_1(\corr_n))
\]
converges. Then, there exists a regular Borel probability measure $\mu$ such that
\[
 d_1(\corr_1)^{-1}\!\!\dots d_1(\corr_n)^{-1}F^*_{\raisebox{-1pt}{\!$\scriptstyle \corr_n\circ\dots\circ\corr_1$}}(\FS)
 \weakST \mu \;\; \text{as measures, as $n\to \infty$.}
\]
The measure $\mu$ places no mass on polar sets.
\end{result}

\begin{result}[a part of Th{\'e}or{\`e}me~1.2 of \cite{dinhSibony:dvtma06} paraphrased for
correspondences on $\pro$]\label{R:2ndRes}
Let $\corr_n$, $n\in \zahl_+$, be holomorphic correspondences on $\pro$. Suppose
$\sum_{n\in \zahl_+}d_0(\corr_n)/d_1(\corr_n)$ converges. Then, there exists a polar set $\excep\varsubsetneq \pro$
such that for any $w\in \pro\setminus\excep$,
\[
 d_1(\corr_n)^{-1}\!\big(F^*_{\raisebox{-1pt}{\!$\scriptstyle \corr_n$}}(\FS) -
 F^*_{\raisebox{-1pt}{\!$\scriptstyle \corr_n$}}(\delta_w)\big)
 \weakST 0 \;\; \text{as $n\to \infty$.}
\]
\end{result}

Both results involve the  notion of the pullback of a measure by a correspondence. We shall present
a {\em very brief} discussion\,---\,focused on our needs at hand\,---\,on how holomorphic correspondences act on currents.
\smallskip

To see that the pullbacks in the above results are measures, we will leave it to the reader to work
out\,---\,in analogy with the discussion that we present below\,---\,that for any {\em volume form} $\omega$,
$F_\corr^*(\omega)$ is a positive measure. Let us now define $F_\corr^*(\delta_w)$. The {\em formal}
prescription for the pullback of any current $T$ on $X$ of bidegree $(p,p)$, $p = 0, 1$, is:
\begin{equation}\label{E:pullback}
F_{\corr}^*(T)\,:=\,(\pi_1)_*\left(\pi_2^*(T)\wedge [\corr]\right).
\end{equation}
$\corr$ detemines a current of bidimension $(1,1)$ via the currents of integration given by its constituent
subvarieties $\Gamma_j$. We denote this current by $[\corr]$. The above prescription is meaningful only for 
certain types of currents. In the case of measures: we use the fact that one can define the pullback of 
a measure by a submersive mapping between manifolds. To begin with, pick a point $w$ from the
Zariski-open set $\OM$ defined in subsection~\ref{SS:degrees} (with $X_1 = X_2 = X$, a compact Riemann
surface). The prescription \eqref{E:pullback} gives
\[
 \langle F_{\corr}^*(\delta_w), \varphi \rangle\,\equiv_{{\rm by \ duality}}\,\left\langle \pi_2^*(\delta_w)\wedge [\corr],
 		\pi_1^*\varphi \right\rangle\,:=\,\sum_{j=1}^N m_j\big\langle(\left.\pi_2\right|_{\Gamma_j})^*(\delta_w),
 		\pi_1^*\varphi \big\rangle .
\]
Since, by our description of $\Omega$, $w$ is a regular value of $\left.\pi_2\right|_{\Gamma_j}$ for each
$j = 1,\dots, N$, the pullback measures above are classically known, and\,---\,in our particular case\,---\,give:
\begin{equation}\label{E:pullbackDirac1}
 \sum_{j=1}^N m_j\big\langle(\left.\pi_2\right|_{\Gamma_j})^*(\delta_w), \pi_1^*\varphi
 \big\rangle\,=\,\sum_{j=1}^N m_j\negthickspace\negthickspace\sum_{z:(z,w)\in\Gamma_j}
 \negthickspace\varphi(z)\,=:\,\Lam[\varphi](w) \; \; \text{(provided $w\in \OM$)}.
\end{equation}
For any fixed continuous function $\varphi$, $\Lam[\varphi]$ extends
continuously to each $w\in X\setminus\OM$. We shall still denote this continuous extension
of the middle term of \eqref{E:pullbackDirac1} as  $\Lam[\varphi]$, and use it to {\bf define}
$F^*_\corr(\delta_w)$: i.e.,
\begin{equation}\label{E:pullbackDirac2}
 \langle F^*_\corr(\delta_w), \varphi \rangle\,:=\,\Lam[\varphi](w) \; \; \forall w\in X, \; \;
						\forall \varphi\in \smoo(X).
\end{equation}
Thus, $F^*_\corr(\delta_w)$ is a measure supported on $F_{\acorr}(w)$. Now, combining 
Results~\ref{R:1stRes} and \ref{R:2ndRes} with Proposition~\ref{P:degreeSeq}\,---\,taking
$\corr_1=\corr_2=\corr_3=\dots=\corr$ in
Result~\ref{R:1stRes} and $\corr_n = \corr^{\circ n}$, $n = 1, 2, 3,\dots$, in Result~\ref{R:2ndRes}\,---\,we
get the following:

\begin{fact}\label{F:DSM}
Let $\corr$ be a holomorphic correspondence on $\pro$ such that $d_0(\corr) < d_1(\corr)$.
There exist a polar set $\excep\varsubsetneq \pro$  and a regular Borel probability measure $\mu_\corr$
such that for each $w\in \pro\setminus\excep$ 
\[
 d_1(\corr)^{-n}F_{\corr^{\circ n}}^*(\delta_w) \weakST \mu_\corr \;\; \text{as measures, as $n\to \infty$}.
\]
The measure $\mu_\corr$ places no mass on polar sets.
\end{fact}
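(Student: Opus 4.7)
The plan is to combine Results~\ref{R:1stRes} and~\ref{R:2ndRes} with Proposition~\ref{P:degreeSeq}, specialising the two sequences of correspondences in the natural way and then subtracting the resulting convergences, as signalled in the paragraph immediately preceding the Fact.

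First I would apply Result~\ref{R:1stRes} to the constant sequence $\corr_1 = \corr_2 = \cdots = \corr$. The composition then collapses to $\corr^{\circ n}$, and Proposition~\ref{P:degreeSeq} gives $d_1(\corr_1)\cdots d_1(\corr_n) = d_1(\corr)^n$, so the summability hypothesis reduces to the convergence of the geometric series $\sum_n (d_0(\corr)/d_1(\corr))^n$, which holds by the standing assumption $d_0(\corr) < d_1(\corr)$. This yields a regular Borel probability measure $\mu_\corr$ (placing no mass on polar sets) with
\[
 d_1(\corr)^{-n}F^*_{\corr^{\circ n}}(\FS)\weakST \mu_\corr.
\]

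Next I would apply Result~\ref{R:2ndRes} to the sequence of iterates $\corr_n := \corr^{\circ n}$. Its hypothesis is that $\sum_n d_0(\corr^{\circ n})/d_1(\corr^{\circ n}) < \infty$; Proposition~\ref{P:degreeSeq} gives $d_1(\corr^{\circ n}) = d_1(\corr)^n$, and the analogous identity $d_0(\corr^{\circ n}) = d_0(\corr)^n$ follows by applying Proposition~\ref{P:degreeSeq} to the adjoint $\acorr$, once one checks that $(\corr^{\circ n})^\dagger = (\acorr)^{\circ n}$\,---\,a direct unwinding of the composition rule of Subsection~\ref{SS:compo}. The ratio is therefore again the same geometric sequence, and Result~\ref{R:2ndRes} furnishes a polar set $\excep\varsubsetneq \pro$ such that, for every $w\in\pro\setminus\excep$,
\[
 d_1(\corr)^{-n}\bigl(F^*_{\corr^{\circ n}}(\FS) - F^*_{\corr^{\circ n}}(\delta_w)\bigr)\weakST 0.
\]

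Subtracting the second convergence from the first yields $d_1(\corr)^{-n}F^*_{\corr^{\circ n}}(\delta_w)\weakST \mu_\corr$ for every $w\in\pro\setminus\excep$, which is exactly the Fact. The only step beyond bookkeeping is the multiplicativity of $d_0$ under iteration, which relies on retaining the multiplicity coefficients $m_j$ as part of the data of a correspondence (as emphasised in Subsection~\ref{SS:compo}); apart from this check, the argument is a clean concatenation of the two Dinh--Sibony results and presents no real analytic obstacle.
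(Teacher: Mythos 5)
Your proof is correct and is exactly the argument the paper intends: the paragraph immediately preceding Fact~\ref{F:DSM} spells out precisely this combination of Results~\ref{R:1stRes} and~\ref{R:2ndRes} via Proposition~\ref{P:degreeSeq}, with the constant sequence in the former and the iterated sequence in the latter, followed by the subtraction you describe. You also supply the one small detail the paper leaves implicit\,---\,that $d_0(\corr^{\circ n}) = d_0(\corr)^n$ via applying Proposition~\ref{P:degreeSeq} to the adjoint\,---\,which is needed to verify the hypothesis of Result~\ref{R:2ndRes} and is correctly justified.
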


\noindent{We must state that {\em we do not claim} the above fact as an original result. It is already
well known to those who have studied \cite{dinhSibony:dvtma06}; but a precise statement in the above form
is hard to find in \cite{dinhSibony:dvtma06}.} 
\smallskip

The following lemma will be useful in the proof of Theorem~\ref{T:distrbn_rep-fixpts}

\begin{lemma}\label{L:simpConn}
Let $\corr$ be any holomorphic correspondence with the properties stated in Fact~\ref{F:DSM} above,
and let $\mu_\corr$ be the measure associated to $\corr$. Let  $C = \{w_0, w_1, w_2,\dots\}$
be a countable subset of $\pro$. Given any $M\in \nat$ and $\eps > 0$, we can find a
simply-connected domain $U \equiv U(\eps, M)$ that is biholomorphic to $\dee$ such that:
\begin{itemize}
 \item $U\cap \{w_\nu: 0\leq \nu \leq M\} = \varnothing$, and
 \item $\mu_\corr(U) > 1-\eps$.
\end{itemize}
\end{lemma}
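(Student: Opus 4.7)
The plan is to exhibit $U$ as the complement $\pro\setminus T$ of a tree $T$ obtained by joining the points $w_0,\ldots,w_M$ to a common base point $p_0$ by pairwise non-crossing simple arcs in $\pro$. The complement in $S^2\cong\pro$ of any embedded finite tree is an open topological disc; in particular it is simply connected, and since $\pro\setminus U = T$ contains more than one point, the uniformisation theorem yields a biholomorphism $U\cong\dee$. The inclusion $\{w_0,\ldots,w_M\}\subset T$ makes the first bullet of the lemma automatic, while the second bullet follows at once provided the arcs are chosen to have zero $\mu_\corr$-mass.

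I would first invoke Fact~\ref{F:DSM}: since single points in $\pro$ are polar and $\mu_\corr$ places no mass on polar sets, the measure $\mu_\corr$ is atomless, and in particular assigns zero mass to any finite set. Pick $p_0\in\pro\setminus\{w_0,\ldots,w_M\}$ arbitrarily, and construct inductively, for $j = 0,1,\ldots,M$, a simple arc $\alpha_j$ in $\pro$ from $p_0$ to $w_j$ with $\mu_\corr(\alpha_j)=0$, chosen to meet $\alpha_0,\ldots,\alpha_{j-1}$ only at $p_0$. Setting $T := \bigcup_{j=0}^{M}\alpha_j$, the set $U := \pro\setminus T$ is simply connected and biholomorphic to $\dee$ as above, the points $w_0,\ldots,w_M$ all lie in $T$, and
\[
 \mu_\corr(U) \;=\; 1 - \mu_\corr(T) \;=\; 1 \;>\; 1-\eps.
\]

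The step that I expect to be the main obstacle is producing, at each stage, an arc $\alpha_j$ of vanishing $\mu_\corr$-mass inside the complementary region $V_j := \pro\setminus(\alpha_0\cup\dots\cup\alpha_{j-1})$ and still running from $p_0$ to $w_j$. I would settle it by a Fubini-type argument: inside a suitable simply connected subdomain of $V_j$ containing some reference arc from $p_0$ to $w_j$, one can exhibit an uncountable family of pairwise disjoint simple arcs with endpoints $p_0$ and $w_j$ (for instance, by conformally transporting horizontal segments of an infinite strip, bending them slightly near the endpoints). Because $\mu_\corr$ is a finite measure and any pairwise disjoint family of Borel sets in a finite measure space contains at most countably many members of positive mass, an arc $\alpha_j$ of zero $\mu_\corr$-mass exists in this family, completing the induction.
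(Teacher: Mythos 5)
Your proof is correct in substance and takes a genuinely different route from the paper's. Both proofs obtain $U$ by deleting a one-dimensional set from a region of large $\mu_\corr$-mass, but the key step differs. The paper passes (in a chart omitting $w_0$) to the Lebesgue decomposition $\mu_\corr = \mu_s + \muLeb$, invokes the differentiation theorem to locate a set $G$ of full Lebesgue measure on which $\mu_s$ has vanishing local density, and then threads a single piecewise-smooth arc through all the $w_\nu$ that stays in $G$ away from the $w_\nu$, controlling $\mu_s$ near each $w_\nu$ by shrinking squares; the final estimate shows the arc has $\mu_\corr$-mass $<\eps/2$. You bypass the decomposition and the differentiation theorem entirely: since $\mu_\corr$ is a finite measure, any pairwise disjoint family of Borel sets contains at most countably many members of positive mass, so an uncountable pencil of arcs in the right homotopy class yields an arc of exactly zero $\mu_\corr$-mass, and the tree $T$ built inductively this way has $\mu_\corr(T)=0$. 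Your route is cleaner in that it needs no quantitative control of $\mu_s$; the price is an extra burden of planar topology in constructing the pencils and keeping the tree embedded.

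A few points you should tighten when writing this out. First, arcs with endpoints $p_0$ and $w_j$ are never literally pairwise disjoint; apply the countability argument to the \emph{open} arcs $\alpha\setminus\{p_0,w_j\}$, which are disjoint, and then add back the endpoints using the fact that singletons are polar and $\mu_\corr$ charges no polar set (this is also why $\mu_\corr$ is atomless). Second, at stage $j$ the base point $p_0$ lies on $\bdy V_j$ as a branch point of the tree built so far, not in $V_j$; fix one access (prime end) to $p_0$ and land all arcs there. Since $\bdy V_j$ is a finite union of arcs, hence locally connected, Carath\'eodory's theorem lets you transport your strip picture cleanly. Third, $\alpha_j$ should also avoid the finitely many points $w_{j+1},\dots,w_M$, so that $w_i\in V_i$ at each later stage; this excludes only countably many arcs from the pencil, so the Fubini-type argument is unaffected. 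With these repairs the argument is complete and gives $\mu_\corr(U)=1$, which is stronger than the stated $\mu_\corr(U)>1-\eps$.
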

\begin{proof}
Since $\mu_\corr$ does not place any mass on polar sets (and is a finite measure), we can find a simply-connected
neighbourhood $D_\eps$ of $w_0$ with $\overline{D}_\eps\cap \{w_\nu: 1\leq \nu \leq M\} = \varnothing$ such that
\begin{equation}\label{E:opener}
 \mu_\corr(\pro\setminus\overline{D}_\eps)\,=\,1-\eps/2.
\end{equation}
If $M = 0$, then $U := (\pro\setminus\overline{D}_\eps)$ has the desired properties, and we are done.
\smallskip

For the remainder of this proof, we will assume that $M\geq 1$. 
If we set 
\[
V\,:=\,\pro\setminus\big(\overline{D}_\eps\cup \{w_\nu : 1\leq \nu \leq M\}\big),
\]
then by \eqref{E:opener}, $\mu_\corr(V) = 1-\eps/2$. Without loss of generality,
we may assume that $w_0$ is the point at infinity, whence
$({\sf supp}(\mu_\corr)\setminus\overline{D}_\eps)\Subset \cplx$. Let $\Lb$ denote the two-dimensional
Lebesgue measure on $\cplx$ and let
\[
 \mu_\corr\,=\,\mu_s + \muLeb
\]
denote the Lebesgue decomposition of $\mu_\corr$ such that $\mu_s\perp \Lb$ and 
$\muLeb\ll \Lb$. Let $Q_z(r)$ denote the closed square in $\cplx$ with edges parallel to
the real and imaginary axes of $\cplx$, centre $z$ and
edge-length $r$. Then, we know that (see \cite[Theorem~10.48]{wheeZyg:mi1977}, for instance)
\begin{equation}\label{E:shwink}
 \lim_{r\to 0^+}\frac{\mu_s(Q_z(r))}{\Lb(Q_z(r))}\,=\,0 \; \; \text{for $\Lb$-a.e. $z$.}
\end{equation}
Let $G$ denote the set on which the equality in \eqref{E:shwink} holds and let $B := \cplx\setminus G$.
\smallskip

Note that $\{w_\nu: 1\leq \nu \leq M\}$ need not be disjoint from $B$. Since $\mu_\corr$ places no
mass on polar sets, $\mu_s(\{w_\nu\}) = 0$ for $1\leq \nu\leq M$. Thus, we can find mutually disjoint
closed squares $S_1,\dots, S_M$, with $S_\nu$ centred at $w_\nu$, that are so small that
$\mu_s(S_\nu) < \eps/4M$, $1\leq \nu\leq M$. Now note that $B$ has empty interior. Thus, we can
find a piecewise smooth path $\sigma : [0,1]\lrarw \cplx$ such that, writing $\mathcal{A} := \sigma([0, 1])$,
\begin{itemize}
 \item $\sigma(\nu/M) = w_\nu$, $1\leq \nu\leq M$;
 \item $V\setminus\mathcal{A}$ is simply connected; and
 \item $\mathcal{A}\setminus \big(\cup_{1\leq \nu\leq M}S_\nu\big) = \sqcup_{1\leq \nu\leq M}A_\nu$,
 where each $A_\nu$ is a sub-arc of $\mathcal{A}$ lying in $G$.
\end{itemize}
Let us set $U := (V\setminus\mathcal{A})$. Now, by \eqref{E:opener} and the fact that $\muLeb\ll \Lb$ it follows that
\begin{equation}\label{E:goodeq}
 \muLeb(U)\,\geq\,\muLeb(V)-\muLeb(\mathcal{A})\,=\,\muLeb(V)
 -\int\nolimits_{\mathcal{A}}1\,d\muLeb\,=\,\muLeb(V).
\end{equation}
In view of \eqref{E:shwink} and the fact that $A_\nu\subset G$, $1\leq \nu\leq M$, we can find a covering of
each $A_\nu$ by small closed squares each of whose $\mu_s$-measure is $O(\delta^2)$ and such that
the number of such squares is $O(1/\delta)$ as $\delta\to 0^+$. By this\,---\,and by the fact that
$\mu_s(S_\nu) < \eps/4M$, $1\leq \nu\leq M$\,---\,we can show that $\mu_s(\mathcal{A}) < \eps/2$.
Then, in view of \eqref{E:goodeq}, $U$ has the required properties. 
\end{proof}

\subsection{The distribution of repelling periodic points}\label{SS:rep_pp}
The proof of Theorem~\ref{T:distrbn_rep-fixpts} follows from an auxiliary theorem stated in the
language of correspondences: i.e., Theorem~\ref{T:rep-fixpts_corr} below.
This theorem is in the same spirit as Th{\'e}or{\`e}me~4.1
of \cite{dinh:sammcl05} by Dinh. The proofs of both theorems rely on an idea
of Briend--Duval in \cite{briendDuval:dcmeeP01}. The conclusion of
Theorem~\ref{T:rep-fixpts_corr} does not require one of the hypotheses\,---\,which, designated as
(H4), concerns a certain measure supported on the postcritical set (we will not explicitly
need the definition of this set here)\,---\,of \cite[Th{\'e}or{\`e}me~4.1]{dinh:sammcl05}. However,
(H4) is very natural given that Dinh's result addresses not just iterations
but sequences of {\em general} holomorphic correspondences on a compact Riemann surface. 
This raises the question: could one prove Theorem~\ref{T:rep-fixpts_corr} by establishing that (H4) holds
and invoking \cite[Th{\'e}or{\`e}me~4.1]{dinh:sammcl05}? This is not entirely
clear; we shall not dwell upon this any further since one can very economically achieve the {\bf purpose}
that the hypothesis (H4) serves. This, essentially, is the {\em raison d'\^{e}tre} of
Lemma~\ref{L:simpConn}. The very special 
form of the correspondences
in Theorem~\ref{T:rep-fixpts_corr} allows us to circumvent many of the obstacles posed by the
topology of the very general setting of \cite[Th{\'e}or{\`e}me~4.1]{dinh:sammcl05}.
\smallskip

For a correspondence of the form \eqref{E:special}
on $\pro$, we say that a point $x\in \pro$
is a {\em repelling fixed point} if it is a repelling
fixed point for some $g_j$. For greater clarity, we will somtimes refer to this $x$ as a {\em repelling fixed point
associated to $g_j$}. If $x$ is a repelling fixed point, then note that its multiplicity is
\[
 {\sf Card}\big(\{1\leq j\leq N : x \ \text{is associated to $g_j$}\}\big).
\]
(Recall that the local intersection multiplicity of $\{(z,w)\in \pro\times\pro : z = w\}$ with ${\sf graph}(g_j)$ at $(x, x)$,
if $x$ is repelling fixed point of $g_j$, is $1$.)

\begin{theorem}\label{T:rep-fixpts_corr}
Let $\corr$ be a correspondence of the form
\[
 \corr\,= \sum_{1\leq j\leq N}{\sf graph}(g_j)
\]
on $\pro$, where each $g_j$ is a rational map, with $g_1,\dots, g_N$ not necessarily distinct, and $\deg(g_j)\geq 2$
for at least one $j$, $1\leq j\leq N$. Write
\[
 \Repp{n}\,:=\,\text{the {\em list} of repelling fixed points of $\corr^{\circ n}$ repeated according to multiplicity}.
\]
Then, for the sequence of measures $\{\mu_n\}$, $\mu_n$ as defined below, we have:
\begin{equation}\label{E:asymp3}
 \mu_n\,:=\,d_1(\corr)^{-n}\sum\nolimits_{x\in \Repp{n}}\!\delta_x\weakST \mu_\corr \;\; \text{as $n\to \infty$,}
\end{equation}
where $\mu_\corr$ is as given by Fact~\ref{F:DSM}
\end{theorem}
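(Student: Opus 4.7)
The strategy adapts the Briend--Duval argument~\cite{briendDuval:dcmeeP01} to correspondences: one pairs regular inverse branches of $\corr^{\circ n}$ over a well-chosen domain with repelling fixed points of $\corr^{\circ n}$, then controls the distribution of these fixed points via Fact~\ref{F:DSM}. Since $\bigcup_{n\geq 1} C_n$ is a countable union of finite sets (and therefore polar, with zero $\mu_\corr$-mass), Lemma~\ref{L:simpConn} furnishes, for each $\eps \in (0,1)$ and each $l \in \zahl_+$, a simply-connected domain $U = U(\eps,l) \subset \pro$ biholomorphic to $\dee$ with $\mu_\corr(U) > 1-\eps$ and $U \cap C_l = \varnothing$. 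Choosing $l$ so large that $(N/d_1(\corr))^l \tau(N+1) < \eps$, Lemma~\ref{L:branch-mult} yields at least $(1-\eps)d_1(\corr)^n$ regular inverse branches of $\corr^{\circ n}$ over $U$ for every $n$. Since $d_0(\corr^{\circ n})/d_1(\corr^{\circ n}) = (N/d_1(\corr))^n \to 0$, Lemma~\ref{L:many_branches}(b) guarantees that, for any fixed $W \Subset U$, all but an $O(\eps)$-fraction of these branches $\gamma$ satisfy $\di(\gamma(W)) = O\bigl((N/d_1(\corr))^{n/2}\bigr) \to 0$.

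To extract fixed points, I would fix $\phi \in C(\pro)$ and a threshold $\delta > 0$ such that $|\phi(x)-\phi(y)| < \eps$ whenever the spherical distance $d(x,y) < \delta$. Identifying $U$ with $\dee$, I would select a finite disjoint collection of Euclidean disks $\{B(p_i, r_i)\}_{i=1}^K \Subset U$ with $\sup_i \di(B(p_i, r_i)) < \delta$, each $p_i \in \pro \setminus \excep$ (the polar exceptional set of Fact~\ref{F:DSM}), and $\mu_\corr\bigl(\bigcup_i B(p_i, r_i/4)\bigr) > 1-2\eps$; this is possible because $\mu_\corr(\excep) = 0$. For every regular inverse branch $\gamma$ of $\corr^{\circ n}$ over $U$ satisfying both (i)~$\gamma(p_i) \in B(p_i, r_i/4)$ and (ii)~$\di(\gamma(B(p_i, r_i/2))) < r_i/4$, the triangle inequality forces $\gamma(B(p_i, r_i/2)) \subset B(p_i, r_i/2)$, so the Schwarz lemma produces a unique attracting fixed point $x_\gamma \in B(p_i, r_i/2)$ of $\gamma$. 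Because $\gamma$ is an inverse branch of $\corr^{\circ n}$, this $x_\gamma$ is a repelling fixed point of $\corr^{\circ n}$ contributing to $\Repp{n}$; transversality of the intersection (as noted in the parenthetical before the theorem statement) ensures the contribution has multiplicity one.

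To count and pass to the limit, Fact~\ref{F:DSM} applied at each $p_i \notin \excep$ gives $d_1(\corr)^{-n}$ times the number of preimages of $p_i$ under $\corr^{\circ n}$ lying in $B(p_i, r_i/4)$ tending to $\mu_\corr(B(p_i, r_i/4))$. After subtracting the preimages that do not come from a regular branch (at most $\eps\, d_1(\corr)^n$ by Lemma~\ref{L:branch-mult}) and the branches failing (ii) (at most $O(\eps)\, d_1(\corr)^n$ by Lemma~\ref{L:many_branches}(a)), at least $(1-O(\eps))\,d_1(\corr)^n \mu_\corr(B(p_i, r_i/4))$ branches satisfy both (i) and (ii), each producing a distinct $x_\gamma \in \Repp{n} \cap B(p_i, r_i/2)$. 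Since $|\phi(x_\gamma) - \phi(p_i)| < \eps$, summing over $i$ and using $\mu_\corr\bigl(\bigcup_i B(p_i, r_i/4)\bigr) > 1 - 2\eps$ delivers
\[
 \liminf_{n\to\infty} \int \phi \, d\mu_n \;\geq\; \int \phi \, d\mu_\corr \,-\, O\bigl(\eps(1 + \|\phi\|_\infty)\bigr).
\]
The matching upper bound follows by applying this lower bound to $\sup\phi - \phi$ in conjunction with the total-mass estimate $\sharp\Repp{n} \leq d_1(\corr)^n + N^n$ from the Remark after Theorem~\ref{T:distrbn_rep-fixpts}, which forces $\mu_n(\pro) \to 1$; letting $\eps \downarrow 0$ then yields the claimed weak-$*$ convergence. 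The main obstacle will be the careful bookkeeping ensuring that distinct good branches yield distinct repelling fixed points, each counted with multiplicity one---i.e., that no repelling fixed point of $\corr^{\circ n}$ is double-counted across branches or across the disks $B(p_i, r_i/2)$. This is controlled by the disjointness of the $B(p_i, r_i)$, the transversality observation above, and the fact that each contracting $\gamma$ has exactly one fixed point in its trapping disk.
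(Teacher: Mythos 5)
Your strategy is recognisably Briend--Duval, but it departs from the paper's execution in two significant ways: the paper works with a single triple of nested simply-connected domains $W'\Subset W\Subset U$, uses a \emph{single} marker point $w_0\in W'\setminus\excep$, and compares the four measures $\mu_\corr$, $\mu_n^{w_0}$, $\mu_n'$ (Dirac masses at $\gamma(w_0)$ over the trapping branches), and $\bar\nu_n$ (Dirac masses at the attracting fixed points $z(\gamma)$). You instead use a finite \emph{disjoint} collection of small disks $B(p_i,r_i)$ and a \emph{separate} base point $p_i$ for each one, applying Fact~\ref{F:DSM} at every $p_i$. The latter is closer to the original $\mathbb{P}^k$ argument and is a legitimate alternative, but it introduces a problem that the paper's single-domain/single-point setup avoids entirely.

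The gap is in the error accounting. From Lemmas~\ref{L:branch-mult} and~\ref{L:many_branches} you deduce, for each fixed $i$, that the number of ``good'' branches $\gamma$ with $\gamma(p_i)\in B(p_i,r_i/4)$ and $\di(\gamma(B(p_i,r_i/2)))<r_i/4$ is at least
\[
d_1(\corr)^n\bigl[\mu_\corr(B(p_i,r_i/4)) - o(1)\bigr]\;-\;O(\eps)\,d_1(\corr)^n,
\]
the subtracted $O(\eps)\,d_1(\corr)^n$ coming from non-branch preimages of $p_i$ plus branches of large image diameter. This subtraction is \emph{additive and of size $O(\eps)$ per disk}, not a multiplicative factor $(1-O(\eps))$ as your proposal writes. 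Summing over the $K$ disks therefore yields a lower bound of the form $d_1(\corr)^n\bigl[(1-2\eps) - O(K\eps) - o(1)\bigr]$, and the term $O(K\eps)$ is not controlled: the number $K$ of disks needed to achieve $\mu_\corr\bigl(\bigcup_i B(p_i,r_i/4)\bigr)>1-2\eps$ itself depends on $\eps$ and can blow up as $\eps\downarrow 0$. So the claimed inequality $\liminf_n\int\phi\,d\mu_n\ge\int\phi\,d\mu_\corr - O(\eps(1+\|\phi\|_\infty))$ does not follow from the estimates as organised. The fix is standard but must be made explicit: first fix the coverage parameter and hence $K$, then choose the parameter entering Lemmas~\ref{L:branch-mult} and~\ref{L:many_branches} small enough in terms of $K$; alternatively, emulate the paper by collapsing to one marker point $w_0$ and one trapping domain $W$, so the subtracted error occurs only once. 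Absent such a decoupling, the argument as written is circular. Your remaining observations --- the triangle-inequality trapping $\gamma(B(p_i,r_i/2))\subset B(p_i,r_i/2)$, uniqueness of the attracting fixed point by Schwarz/Wolff--Denjoy, the multiplicity-one bookkeeping via the transversality remark and disjointness of the $B(p_i,r_i)$, and the total-mass bound $\sharp\Repp{n}\le d_1(\corr)^n+N^n$ to close the upper bound --- are all sound and mirror, in spirit, the corresponding steps in the paper's proof.
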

\begin{proof}
Before we proceed, we must clarify that we shall follow the notation discussed prior to Lemma~\ref{L:many_branches}
when working with lists. Also, let us abbreviate: $D:=d_1(\corr)$.
\smallskip

By B{\'e}zout's Theorem
for $\pro\times\pro$, see \cite[Chapter~IV, Section~2]{shafarevich:bagI94}, we have
\begin{align}
 \sharp\Repp{n}\,&\leq\,D^n + \sharp(\text{list of irreducible branches of $\corr^{\circ  n}$}) \notag \\
 &=\,D^n + N^n, \label{E:basic_count}
\end{align}
whence $0\leq {\rm mass}(\mu_n) < 2$. From this, and the fact that $\pro$ is compact, there
exists a subsequence $\{\mu_{n_k}\}$ and a Borel measure $\bar{\mu}$ such that
\begin{equation}\label{E:limm}
 \mu_{n_k}\weakST \bar{\mu} \; \; \text{as $k\to \infty$.}
\end{equation}
We shall show that for any such $\{\mu_{n_k}\}$, the associated limit measure $\bar{\mu}$ equals $\mu_\corr$.
To show this, it suffices to meet the following {\bf goal:} to show that for any
$\varphi\in \smoo(\pro; \rea)$ such that $\|\varphi\|_\infty = 1$, and for any $\eps > 0$,
\[
 \left|\,\int\nolimits_{\pro}\varphi d\bar{\mu} -
 \int\nolimits_{\pro}\varphi d\mu_\corr\,\right|\,<\,\eps,
\]
for each limit measure $\bar{\mu}$ described by \eqref{E:limm}.
\smallskip

Toward this goal, we may without confusion\,---\,and for simplicity of notation\,---\,relabel any subsequence
featured in \eqref{E:limm} as $\{\mu_n\}$. Fix an $\eps > 0$.
Let $L\in \zahl_+$ be so large that
\begin{equation}\label{E:downstream}
 \left(\frac{N}{D}\right)^{\!\raisebox{-2pt}{$\scriptstyle{L}$}}\!\!(N+1)\tau\,<\,\frac{\eps}{16},
\end{equation}
where $\tau$ is as in Lemma~\ref{L:branch-mult}. Taking
\[
 C\,=\,\cup_{n=1}^\infty{\sf crit}(\corr^{\circ n})
\]
in Lemma~\ref{L:simpConn} (it will not matter if $C$ is finite), and enumerating $C$ so that
$C_L = \{w_m\in C: 0\leq m\leq M\}$ for some
$M\in \nat$ (where $C_L$ is as introduced in Section~\ref{S:ess}),
we can find three simply-connected domains $W^\prime\Subset W\Subset U$, each biholomorphic
to $\dee$, such that
\[
 \mu_\corr(W^\prime), \ \mu_\corr(W), \ \mu_\corr(U)\,>\,1-\eps/8 \quad\text{and}
 \quad U\cap C_L\,=\,\varnothing.
\]
By \eqref{E:downstream} and Lemma~\ref{L:branch-mult} it follows that $\corr^{\circ n}$ admits at least
$(1- \eps/16)D^n$ regular inverse branches that are holomorphic on $U$, counting according to multiplicity.
Then, by Lemma~\ref{L:many_branches}, there exists a positive integer $N_1 \equiv N_1(\eps)\geq L$ such
that, for $n\geq N_1$, at least $D^n(1-\eps/8)$ (counting according to multiplicity) of the aforementioned regular
inverse branches\,---\,denote these branches by $\bran{n}{}$\,---\,satisfy
\begin{equation}\label{E:shrink}
 \di(\bran{n}{{}}(W))\,\leq\,B_{\eps}\sqrt{N^n/D^n} \qquad (\text{for $n\geq N_1$}),
\end{equation}
where $B_\eps > 0$ is a constant that depends only on $\eps$.
Let $\overline{\fami}(n)^\blot$ be the list of branches having the above property. So,
$\sharp\overline{\fami}(n)^\blot\geq (1-\eps/8)D^n$ (for $n\geq N_1$).
\smallskip

At this stage, we have obtained objects\,---\,and a few estimates about them\,---\,that will allow us to complete
our proof as in the last two paragraphs of the proof  of \cite[Th{\'e}or{\`e}me~4.1]{dinh:sammcl05}.
However, since one key point in the latter arises from a hypothesis in \cite{dinh:sammcl05} that {\em we do
not make}, we should provide a few more details.
Since $\mu_\corr$ does not
place mass on polar sets, we can find a point $w_0\in W^\prime$ such that for the measures $\mu^{w_0}_n$,
$\mu^{w_0}_n$ as defined below, we have according to Fact~\ref{F:DSM}:
\begin{equation}\label{E:DSM}
 \mu^{w_0}_n\,:=\,D^{-n}F^*_{\corr^{\circ n}}(\delta_{w_0}) \weakST \mu_\corr.
\end{equation}
Consider the list
$\wt{\fami}(n)^\blot:=\{\bran{n}{} \ \text{in} \ \overline{\fami}(n)^\blot : \bran{n}{}(w_0)\in W^\prime\}$.
By our construction of $W^\prime$ and by the size of the list $\overline{\fami}(n)^\blot$, we have:
\[
 1 - \eps/8\,\leq\,\mu_\corr(W^\prime) \longleftarrow
 \mu^{w_0}_n(W^\prime)\,\leq\,\frac{\sharp\wt{\fami}(n)^\blot}{D^n} + \frac{\eps}{8}.
\] 
Using this and \eqref{E:DSM}, we can find an integer $N_2\equiv N_2(\eps)$, $N_2\geq N_1$, such that
\begin{equation}\label{E:hit_count}
 \sharp\wt{\fami}(n)^\blot\,\geq\,(1-\eps/2)D^n \; \; \; \forall n\geq N_2.
\end{equation}

Consider, to begin with, $n\geq N_1$.
For each $\bran{n}{}$ listed in $\wt{\fami}(n)^\blot$, $\bran{n}{}(W)\cap W^\prime \neq \varnothing$.
As $W^\prime\Subset W$, it follows from \eqref{E:shrink} that there is a positive integer $N_3\equiv N_3(\eps)$
such that
\[
 \bran{n}{}(W)\Subset W \; \; \text{$\forall\,\bran{n}{}$ in $\wt{\fami}(n)^\blot$ and $\forall n\geq N_3$}.
\]
Since $W$ is biholomorphic to the unit disc, it follows from the Wolff--Denjoy theorem that,
associated to each $\bran{n}{}$ referenced above, there is a unique attracting fixed point of $\left.\bran{n}{}\right|_W$;
call it $z(\bran{n}{})$. Observe that, since $\bran{n}{}$ is a regular inverse branch, $z(\bran{n}{})$ is a repelling fixed
point of $\corr^{\circ n}$. Let us write
\[
 \bar{\nu}_n\,:=\,D^{-n}\sum\nolimits_{\bran{n}{}\in \wt{\fami}(n)^\blot}\!\!\delta_{z(\bran{n}{})}.
\]

Arguing as above, there exists a subsequence $\{\bar{\nu}_{n_k}\}$ and a Borel measure $\bar{\nu}$ such that
\begin{equation}\label{E:lim_meas}
 \bar{\nu}_{n_k}\weakST \bar{\nu} \; \; \text{as $k\to \infty$.}
\end{equation}
Fix a function $\varphi\in \smoo(\pro; \rea)$ such that $\|\varphi\|_\infty = 1$.
Then, owing to the fact that each $z(\bran{n}{})$ is a repelling fixed
point of $\corr^{\circ n}$, the count \eqref{E:basic_count}, and the estimate \eqref{E:hit_count}, we have
\[
 \left|\,\int\nolimits_{\pro}\varphi\,d\bar{\nu}_{n_k} -
 \int\nolimits_{\pro}\varphi\,d\bar{\mu}_{n_k}\,\right|\,\leq\,\frac{N^n}{D^n}+\frac{\eps}{2} \; \; \;
 \forall k: n_k\geq \max(N_2, N_3).
\]
Observe that the above estimate holds true irrespective of the subsequence $\{\bar{\nu}_{n_k}\}$
and of $\eps > 0$. Thus,
keeping in mind the goal stated at the beginning of the proof and the fact that $\eps > 0$ is
arbitrary, it suffices to meet the following {\bf revised goal:} to show that for any $\varphi\in \smoo(\pro; \rea)$
with $\|\varphi\|_\infty = 1$, and for any $\eps > 0$,
\[
 \left|\,\int\nolimits_{\pro}\varphi d\bar{\nu} -
 \int\nolimits_{\pro}\varphi d\mu_\corr\,\right|\,<\,2\eps,
\]
for each limit measure described by \eqref{E:lim_meas}.
\smallskip

Observe that there is no loss of generality in relabelling a subsequence $\{\bar{\nu}_{n_k}\}$
as $\{\bar{\nu}_n\}$ for the purposes of the revised goal. To achieve this, we introduce the auxiliary
measures
\[
 \mu^\prime_n\,:=\,D^{-n}\sum\nolimits_{\bran{n}{}\in \wt{\fami}(n)^\blot}\!\!\delta_{\bran{n}{}(w_0)}.
\]
In view of \eqref{E:DSM}, \eqref{E:hit_count} and \eqref{E:lim_meas}, we can find an
integer $N_*\equiv N_*(\eps, \varphi)$ sufficiently large that\,---\,for a fixed $\varphi$
as described above\,---\,we have:
\[
 \left|\,\int\nolimits_{\pro}\varphi\,d{\sf m}_n -
 \int\nolimits_{\pro}\varphi\,d{\sf m}^\prime_n\,\right|\,\leq\,\eps/2 \; \; \;
 \forall n\geq N_*,
\]
where the pair of measures $({\sf m}_n, {\sf m}^\prime_n)$ is either $(\mu_\corr, \mu^{w_0}_n)$ or
$(\mu^{w_0}_n, \mu^\prime_n)$ or $(\bar{\nu}_n, \bar{\nu})$. We now need to account for the pair of
measures $(\mu^\prime_n, \bar{\nu}_n)$. Since
$z(\bran{n}{}), \bran{n}{}(w_0)\in \bran{n}{}(W)$ for each $\bran{n}{}$ in $\wt{\fami}(n)^\blot$ ($n$
sufficiently large), by raising the value of $N_*$ further if necessary we can ensure, by continuity
of $\varphi$ and by \eqref{E:shrink}, that
\[
 |\varphi(\bran{{\raisebox{-2pt}{$\scriptstyle N_*$}}}{}(w_0)) -
 \varphi(z(\bran{{\raisebox{-2pt}{$\scriptstyle N_*$}}}{}))|\,<\,\eps/2 \; \; \; 
 \text{$\forall\,\bran{{\raisebox{-2pt}{$\scriptstyle N_*$}}}{}$ in $\wt{\fami}(N_*)^\blot$}.
\]
Our revised goal is achieved
through the last two inequalities by using a standard triangle-inequality argument.
\end{proof}

\begin{proof}[The proof of Theorem~\ref{T:distrbn_rep-fixpts}]
Given a finitely generated rational semigroup $S$, having fixed a set of generators $\gen = \{g_1,\dots, g_N\}$,
we apply Theorem~\ref{T:rep-fixpts_corr} to the correspondence $\corr_\gen$, which is given by
\eqref{E:assoc}.

By (a generalisation of) the composition formula \eqref{E:compo}, it is easy to see that
\[
 \Repp{n}\,=\,\rep{n; \gen} \; \; \; \text{for $\corr = \corr_\gen$.}
\]
The measures $\mu_n$ in \eqref{E:asymp2} are special cases of the measures $\mu_n$ considered
in Theorem~\ref{T:rep-fixpts_corr}. Hence, from \eqref{E:asymp3}, Theorem~\ref{T:distrbn_rep-fixpts}
follows.
\end{proof}

\begin{remark}\label{Rem:elbrtn}
The conclusion of Theorem~\ref{T:distrbn_rep-fixpts} for the case of the semigroups $\{f^{\circ n} : n\in \nat\}$,
where $f$ is a rational map with $\deg(f)\geq 2$, follows easily from \cite[\S3, Theorem~3]{lyubich:epreRs83}.
A possible approach to proving Theorem~\ref{T:distrbn_rep-fixpts} is suggested by the proof of
\cite[\S3, Theorem~3]{lyubich:epreRs83} (from which we borrow one key idea). But
the expanding features of the aforementioned $f$ play
such a significant role in the latter proof that the conclusion of
Theorem~\ref{T:distrbn_rep-fixpts} that is obtainable by a more ``direct'' approach holds only for semigroups
$\langle g_1,\dots, g_N\rangle$  such that $\deg(g_j)\geq 2$ for {\em each} $j = 1,\dots, N$. It {\em may}
be possible, with much more effort, to improve upon the latter without appealing to any auxiliary object, but the
use of the correspondence $\corr_\gen$ has certain advantages.
For instance, it reveals that the condition
\eqref{E:key_cond} endows $\corr_\gen$ with expanding features that are adequate to be useful in
applications. Here, we understand ``expanding'' to mean the existence of sufficiently many holomorphic
{\em inverse} branches of $\corr_\gen^{\circ n}$, $n = 1, 2, 3,\dots$, the diameters of whose ranges
shrink to zero as $n\to \infty$. This is the content of Lemma~\ref{L:many_branches}, which is actually
a phenomenon about general holomorphic correspondences. Of course, Lemma~\ref{L:branch-mult} plays
a facilitating role for the use of Lemma~\ref{L:many_branches}, but Lemma~\ref{L:branch-mult}
can also be stated for a more general class of correspondences.
\end{remark}

\section{The proof of Theorem~\ref{T:hausDim}}

The proof of Theorem~\ref{T:hausDim} requires the following result.

\begin{lemma}\label{L:dim_bound}
Let $\mu$ be a probability measure on a set $X\subset \cplx$. If there exist positive constants
$c, t$ and $r_0$ such that, for each $r\in (0, r_0)$,
\[
 \mu(D(z; r))\,\leq\,cr^t
\]
for every disc $D(z; r)$ with centre $z$ and radius $r$ that has non-empty intersection with $X$, then
$\dim_H(X)\geq t$.
\end{lemma}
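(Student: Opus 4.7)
The plan is to invoke the standard mass distribution principle. It suffices to show that $\mathcal{H}^t(X) > 0$, where $\mathcal{H}^t$ denotes the $t$-dimensional Hausdorff (outer) measure, since this bound implies $\dim_H(X) \geq t$ directly from the definition of Hausdorff dimension.

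First, I would fix $\delta \in (0, r_0)$ and consider an arbitrary countable $\delta$-cover $\{U_i\}_{i \in I}$ of $X$, i.e., one for which $\di(U_i) < \delta$ for every $i \in I$. After discarding those $U_i$ that do not meet $X$, pick $z_i \in U_i \cap X$ for each surviving index. Since $U_i \subseteq \overline{D(z_i; \di(U_i))}$, and since for every $\eta > 0$ the slightly enlarged open disc $D(z_i; \di(U_i)+\eta)$ meets $X$ (it contains $z_i$), the hypothesis yields
\[
\mu(U_i) \,\leq\, \mu\bigl(D(z_i; \di(U_i)+\eta)\bigr) \,\leq\, c\bigl(\di(U_i)+\eta\bigr)^t,
\]
and letting $\eta \to 0^+$ gives $\mu(U_i) \leq c\,\di(U_i)^t$.

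Summing over $i \in I$, and using that $\{U_i\}$ covers $X$ and $\mu(X) = 1$, I obtain
\[
1 \,=\, \mu(X) \,\leq\, \sum_{i \in I} \mu(U_i) \,\leq\, c \sum_{i \in I} \di(U_i)^t,
\]
so $\sum_{i \in I} \di(U_i)^t \geq 1/c$ for \emph{every} $\delta$-cover of $X$. Taking the infimum over all such covers gives $\mathcal{H}^t_\delta(X) \geq 1/c$, and letting $\delta \to 0^+$ yields $\mathcal{H}^t(X) \geq 1/c > 0$, completing the argument.

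There is no genuine obstacle here: this is the classical (easy) half of Frostman's lemma. The only point requiring even minor care is that the hypothesis is stated for open discs, whereas the natural containment $U_i \subseteq \overline{D(z_i;\di(U_i))}$ involves a closed disc; this mismatch is handled by the $\eta \to 0^+$ limit above.
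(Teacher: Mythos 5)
Your proof is correct, and it is exactly the paper's approach: the paper simply cites the mass distribution principle from Falconer's \emph{Fractal Geometry} (\S4.1), and what you have written out is precisely the standard one-paragraph proof of that principle (cover $X$, bound each $\mu(U_i)$ by $c\,\di(U_i)^t$ via the containment in a slightly enlarged disc, sum, infimize, conclude $\mathcal{H}^t(X)\geq 1/c>0$). The $\eta\to 0^+$ device to pass from closed to open discs is a clean way to handle the minor mismatch, and your argument correctly uses $\di(U_i)<\delta<r_0$ to ensure the hypothesis applies.
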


\noindent{The above lemma is an immediate consequence of a standard result:
see \cite[Mass distribution principle, \S4.1]{falconer:fg03}, for instance. Our proof also requires
Theorem~\ref{T:rep-fixpts_corr}: specifically, that it implies that for the measure $\mu_\corr$
discussed therein, ${\sf supp}(\mu_\corr)\subseteq \jul\big(\langle g_1,\dots, g_N\rangle\big)$.
\smallskip

Theorem~\ref{T:hausDim} extends a result of Garber \cite{garber:irf78}, on the Julia set of a single rational map
of degree\,$\geq 2$,
to finitely generated semigroups.

\begin{proof}[The proof of Theorem~\ref{T:hausDim}] We first dispense with part~$(a)$ of the
theorem. Since the maps $\left.g_j\right|_{\jul(S)}$, $1\leq j\leq N$, are Lipschitz, it is impossible that 
$\jul(S)$ has empty interior but $g_j(\jul(S))$ has non-empty interior for some $j$. Hence, if
$\jul(S)\cup\,g_1(\jul(S))\cup\dots\cup\,g_N(\jul(S)) = \pro$, then it follows that $\jul(S)$ must have
non-empty interior. Hence $\dim_H(\jul(S)) = 2$.
\smallskip

The strategy for proving part~(b) is to consider a sequence of correspondences related to the semigroup $S$
whose equilibrium measures are subsets of $\jul(S)$. We shall see that a lower bound for
the Hausdorff dimension of the supports of the equilibrium measures of each of these correspondences
will give a progressively better lower bound for $\dim_H(\jul(S))$.
\smallskip
 
As in the statement of the theorem, we consider a set of generators
\[
 \gen\,=\,\{g_1,\dots, g_N\},
\]
and enumerate $\gen$ so that $\deg(g_N) = \max_{1\leq j\leq N}\deg(g_j)$. We define the
correspondences
\[
 \corr(k)\,:=\,\sum_{1\leq j\leq (N-1)}\!\!{\sf graph}(g_j) + k\bcdot{\sf graph}(g_N), \; \; \; k = 1, 2, 3,\dots
\]
Let us denote the equilibrium measure $\mu_{\corr(k)}$ as simply $\mu(k)$. 
For a fixed $k$, let $\{\mu_n(k)\}$ be the sequence of measures approximating $\mu(k)$ in the sense of
\eqref{E:asymp3}. Then:
\[
 {\sf supp}(\mu_n(k))\,=\,\{z : z \ \text{is a repelling fixed point of some word $w$ such that $|w|_\gen = n$}\}.
\]
Thus, it follows from
Theorem~\ref{T:rep-fixpts_corr}, that 
\[
 {\sf supp}(\mu(k))\,\subseteq\,\overline{\{z : z \ \text{is a repelling fixed point of some $g\in S$}\}}, \; \; \; \forall k.
\]
By Result~\ref{R:hinkMart_rep-pts} we get:
\begin{equation}\label{E:supp_k}
 {\sf supp}(\mu(k))\,\subseteq\,\jul(S) \; \; \; \forall k.
\end{equation}

\noindent{{\bf Step 1.} {\em A lower bound for $\dim_H({\sf supp}(\mu(k)))$.}}
\vspace{0.5mm}

\noindent{Fix a $k\in \zahl_+$.
We rewrite $\corr(k)$ in the form given by \eqref{E:special}, wherein the index $j$ runs from
$1$ to $(N+k-1)$ and
\[
 g_j\,=\,\begin{cases}
 		g_j, &\text{if $1\leq j\leq (N-1)$}, \\
 		g_N, &\text{if $N\leq j\leq (N+k-1)$}.
 		\end{cases}
\]
By hypothesis, we have:
\[
 |g_j^\prime(\xi)|\,\leq\,M \; \; \; \forall \xi\in \jul(S) \; \text{and for} \; j=1,\dots, (N+k-1),
\]
where, by the compactness of $\jul(S)$, $M$ is finite.
In what follows, we shall abbreviate $\jul(S)$ to $\jul$. Set
\[
 \lambda(k)\,\:=\,\frac{\log(d_1(\corr(k))/d_0(\corr(k)))}{\log(M)}\,=\,\frac{\log((D+(k\!-\!1)\deg(g_N))/(N+k-1))}{\log(M)},
\]
where $D$ has the same meaning as in Section~\ref{S:distrbn_rep-fixpts}. Fix $t\in (0, \lambda(k))$, and abbreviate
\[
 R(k)\,:=\,\frac{D+(k\!-\!1)\deg(g_N)}{N+k-1}\,\equiv\,\frac{d_1(k)}{d_0(k)}.
\]}

For a positive number $\eps$, let us write
\[
 \jul^{\eps}\,:=\,\jul(S)^\eps\,:=\,\cup_{\xi\in \jul(S)}D(\xi; \eps), \quad \text{and} \quad
 \overline{\jul}^{\eps}\,:=\,\overline{\jul(S)^\eps}.
\]
By Result~\ref{R:hinkMart_rep-pts}, $1 < M < R(k)^{1/t}$. Owing to this, and to our assumption that 
$g_j$ has no critical points on $\jul$, $j = 1,\dots, N$, we can find: 
\begin{itemize}
 \item $\delta_1 > 0$ such that $g_j^\prime(\xi)\neq 0$ for every $\xi\in \jul^{2\delta_1}$, 
 $j=1,\dots,(N+k-1)$;
 \item $r(\xi) > 0$ such that $\left.g_j\right|_{D(\xi;\,r(\xi))}$ is injective, $j=1,\dots, (N+k-1)$, as
 $\xi$ varies through $\overline{\jul}^{\delta_1}$;
 \item $\eps_t > 0$ such that $|g^\prime(\xi)| < R(k)^{1/t}$ for every $\xi\in \jul^{\eps_t}$, 
 $j=1,\dots,(N+k-1)$.
\end{itemize}
Let
\[
 \delta_2\,:=\,\text{the Lebesgue number of the open cover 
 $\big\{D(\xi; r(\xi)) : \xi\in \overline{\jul}^{\delta_1}\big\}$.}
\]
Write $r_t := \min(\delta_1, \delta_2, \eps_t)/4$. We will need to work with the partition $\sqcup_{l=1}^\infty I(l,k)$
of $(0, r_t]$, where
\[
 I(l, k)\,:=\,\big(r_tR(k)^{-l/t}, r_tR(k)^{(1-l)/t}\big], \; \; \; l=1, 2, 3,\dots
\]

Since $\mu(k)$ places no mass on polar sets, by Fact~\ref{F:DSM},
we can find a point $w\in {\sf supp}(\mu(k))$\,---\,i.e., by
\eqref{E:supp_k}, $w\in \jul$\,---\,such that for the measures $\mu_n(w, k)$ as defined below, we have:
\[
  \mu_n(w, k)\,:=\,d_1(\corr(k))^{-n}F^*_{\corr(k)^{\circ n}}(\delta_{w}) \weakST \mu(k).
\]
It is easy to see from the definition of $\jul$\,---\,see \cite[Theorem~2.1]{hinkMartin:dsrf-I96}\,---\,that
$f^{-1}\{w\}\subset \jul$ for each $f\in S$. Hence, ${\sf supp}(\mu_n(w, k)) = 
(F_{\corr^{\circ n}})^\dagger(w)\subset \jul$. Let us fix $r\in (0, r_t]$ and consider a disc
$D(z; r)$ such that $D(z; r)\cap \jul\neq \varnothing$. Let $l\in \zahl_+$ be such that
$r\in I(l, k)$. Then, from the descriptions of the parameters above:
\begin{itemize}
 \item[$a)$] $D(z; r)\varsubsetneq D(x(z); 2r_tR(k)^{(1-l)/t})\subseteq 
 D(x(z); \eps_t/2)\cap D(x(z); \delta_1/2)$ for some point $x(z)$ belonging to $\jul$.
 \item[$b)$] $|g_j^\prime(\zt)|< R(k)^{1/t}$ for every $\zt\in D(z; r)$, $j=1,\dots, (N+k-1)$.
 \item[$c)$] $g_j$ is injective on $D(z; r)$ for $j=1,\dots, (N+k-1)$.
\end{itemize}
With these three assertions, we can prove the following claim, which is the central assertion of this proof.
Once again, we remind the reader that the counts made in the claim below are according to multiplicity, and
we shall distinguish between sets and lists using the notation introduced in Section~\ref{S:ess}. Also, in the
style of Section~\ref{S:intro}, let us write $(F^n)^\dagger := (F_{\corr^{\circ n}})^\dagger$.
\vspace{3mm}

\noindent{{\bf Claim.} Let $r\in (0, r_t]$ and let $D(z;r)$ be an open disc such that $D(z; r)\cap \jul\neq \varnothing$.
Let $l\in \zahl_+$ such that $r\in I(l, k)$. Then, for any $n\in \nat$,
\[
 \sharp\big((F^n)^\dagger(w)\cap D(z; r)\big)^\blot\,\leq\,\max\big(d_0(k)^n, d_1(k)^{n-l+1}d_0(k)^{(\min(n,\,l)-1)}\big).
\]}
\vspace{-1mm}

\noindent{We will prove this claim by induction on the parameter $n$. Note that the claim is trivial when
$n = 0$. Let us assume that the claim is true for $n = m$ for some $m\in \nat$. We will study what this
implies for $n = m+1$. We have two cases to analyse.}
\medskip

\noindent{{\em Case 1.} $r\in I(1, k)$}
\vspace{0.5mm}

\noindent{In this case, the claim is obvious for $n = m+1$ by the meaning of $d_1(\corr(k)) =: d_1(k)$.}
\medskip

\noindent{{\em Case 2.} $r\in I(l, k)$ and $l\geq 2$.}
\vspace{0.5mm}

\noindent{Let $p$ denote any point in the set $(F^{m+1})^\dagger(w)\cap D(z; r)$ (which might occur with
multiplicity greater than $1$). We need to define a few sets:
\begin{align*}
 \mathcal{S}(m+1)\,&:=\,\{1\leq j\leq (N+k-1) : (F^{m+1})^\dagger(w)\cap D(z; r)\cap 
 g_j^{-1}((F^m)^\dagger(w))\neq \varnothing\}; \\
 \mathcal{S}(p, m+1)\,&:=\,\{j\in \mathcal{S}(m+1) : p\in g_j^{-1}((F^m)^\dagger(w))\};
\end{align*}
and the list
\begin{multline}
 L(j, p, m)^\blot\,:=\,\text{the list of all points $\zt\in (F^m)^\dagger(w)$, repeated} \\
 \text{according to multiplicity,
 such that $p\in g_j^{-1}\{\zt\}$.}
\end{multline}}
A point $p\in (F^{m+1})^\dagger(w)$ may have to be counted with multiplicity\,$\geq 2$ precisely because
it could occur as the pre-image under $g_j$ of some point $\zt(j)\in (F^m)^\dagger(w)$ for more than
one $j$, and each $\zt(j)$ itself must be counted {\em in its turn} according to its multiplicity.
Therefore:
\[
 \sharp\big((F^{m+1})^\dagger(w)\cap D(z; r)\big)^\blot\,=\,\sum_{p\in (F^{m+1})^\dagger(w)\cap D(z;\,r)}
 \; \sum_{j\in \mathcal{S}(p,\,m+1)}\sharp L(j, p, m)^\blot.
\]
Note that the indices of the two sums above run through {\bf sets}, whereas $L(j, p, m)^\blot$ is a list.
By $(c)$ above, we have that for each relevant $p$ in the above equation and each $j\in \mathcal{S}(p, m+1)$
associated to it, $g_j$ is injective on $D(z; r)$. Using this, and interchanging the order of summation in the above equation:
\begin{equation}\label{E:1st_count}
 \sharp\big((F^{m+1})^\dagger(w)\cap D(z; r)\big)^\blot\,=\,\sum_{j\in \mathcal{S}(m+1)}
 \sharp\big((F^{m})^\dagger(w)\cap g_j(D(z; r))\big)^\blot.
\end{equation}

Now, owing to $(b)$ above, it follows from an elementary computation that
$g_j(D(z; r))\subset D(g_j(z); rR(k)^{1/t})$. Furthermore, by the definition of $\mathcal{S}(m+1)$
and as $(F^n)^\dagger(w)\subset \jul$ for each $n\in \nat$,
\[
 j\in \mathcal{S}(m+1)\,\Rightarrow\,g_j(D(z; r))\cap\jul \; \text{contains some point of $(F^m)^\dagger(w)$.}
\]
From the last two facts, we have:
\begin{equation}\label{E:touches}
  j\in \mathcal{S}(m+1)\,\Rightarrow\,D(g_j(z); rR(k)^{1/t})\cap\jul\,\neq\,\varnothing.
\end{equation}
By \eqref{E:1st_count}, we have
\[
 \sharp\big((F^{m+1})^\dagger(w)\cap D(z; r)\big)^\blot\,\leq\,\sum_{j\in \mathcal{S}(m+1)}
 \sharp\big((F^{m})^\dagger(w)\cap D(g_j(z); rR(k)^{1/t})\big)^\blot.
\]
By \eqref{E:touches}, each of the discs $D(g_j(z); rR(k)^{1/t})$ featuring in the right-hand side of the above
equation satisfies the hypothesis of our claim above. Then, from the last inequality, our inductive assumption,
and from the fact that $rR(k)^{1/t}\in I(l-1, k)$, we have
\begin{align*}
  \sharp\big((F^{m+1})^\dagger(w)\cap D(z; r)\big)^\blot\,&\leq\,d_0(k)
  \max\big(d_0(k)^m, d_1(k)^{m-(l\!-\!1)+1}d_0(k)^{(\min(m,\,(l\!-\!1))-1)}\big) \\
  &=\,\max\big(d_0(k)^{(m\!+\!1)}, d_1(k)^{(m\!+\!1)-l+1}d_0(k)^{(\min((m\!+\!1),\,l)-1)}\big).
\end{align*}

From Cases~1~and~2, our claim is true for $n=m+1$. By induction, our claim it follows for all $n\in \nat$.
\hfill $\blacktriangleleft$
\smallskip

Fix a disc $D(z; r)$, $r\in (0, r_t]$,
such that $D(z; r)\cap \jul\neq \varnothing$. From the above claim, we get that
for sufficiently large $n$
\[
 d_1(k)^{-n}F^*_{\corr(k)^{\circ n}}(\delta_w)(D(z; r))\,\leq\,\left(\frac{d_1(k)}
 {d_0(k)}\right)^{1-l}\,\leq\,\frac{R(k)}{(r_t)^t}r^t,
\]
where $l$ is the unique integer such that $r\in I(l, k)$. By the limiting behaviour of the above measures,
we see from above that we have a constant $c\equiv c(k, t)$ such that
\[
 \mu(k)(D(z; r))\,\leq\,cr^t \; \; \; \text{(provided $r\in (0, r_t]$, $r_t$ as fixed above)}.
\]
By Lemma~\ref{L:dim_bound}, and the fact that $t$ was picked arbitrarily from $(0, \lambda(k))$, we
get
\begin{equation}\label{E:HD_low-bd}
 \dim_H(\jul)\,\geq\,\dim_H({\sf supp}(\mu(k)))\,\geq\,\frac{\log((D+(k\!-\!1)\deg(g_N))/(N+k-1))}{\log(M)}.
\end{equation}
\smallskip

\noindent{{\bf Step 2.} {\em A lower bound for $\dim_H(\jul)$.}}
\vspace{2mm}

\noindent{Observe
\[
 \frac{D + (k-1)\deg(g_N)}{N+k-1}\,=\,\frac{\newfr{D}{(k-1)} + \deg(g_N)}{\newfr{N}{(k\!-\!1)} + 1}
 \lrarw \deg(g_N) \; \; \; \text{as $k\to +\infty$}.
\]
From this, \eqref{E:supp_k} and \eqref{E:HD_low-bd}, the lower bound \eqref{E:hausDim_loEst} for $\dim_H(\jul)$ follows.}
\vspace{2mm}

\noindent{{\bf Step 3.} {\em The sharpness of the inequality \eqref{E:hausDim_loEst}}}
\vspace{2mm}

\noindent{It is well known that \eqref{E:hausDim_loEst} is sharp for the semigroups $\{f^{\circ n} : n\in \nat\}$,
where $f$ is a rational map. The point that we wish to make is that \eqref{E:hausDim_loEst} is sharp for
rational semigroups having $N$ generators, for any given $N$. To this end, fix $N\in \zahl_+\setminus\{1\}$. Now let
$1 < d_1 < d_2 <\dots < d_N$ be positive integers such that
\[
 z^{d_j}\not\in \langle z^{d_1},\dots, z^{d_{j-1}}\rangle, \; \; \; j=2,\dots, N.
\]
Let $S_N := \langle z^{d_1},\dots, z^{d_N}\rangle$. For any rational semigroup $S$ having at least one element
of degree at least 2
\[
 \jul(S)\,=\,\overline{\bigcup_{g\in S}\jul(g)},
\]
which follows from Result~\ref{R:hinkMart_rep-pts}. Thus, $\jul(S_N) = \bdy\dee$. By definition,
\[
 M\,=\,\sup\big\{d_j|\xi^{d_j - 1}| : \xi\in \bdy\dee, \ j = 1,\dots N\big\}\,=\,d_N.
\]
It is also obvious that none of the maps $z\longmapsto z^{d_j}$, $j = 1,\dots, N$, has critical points on
$\bdy\dee$. It is trivial to see now that the inequality \eqref{E:hausDim_loEst} holds true for $S_N$ as an
equality.}
\end{proof} 
\bigskip
\medskip

\end{document}